     \def\section{\@startsection{section}{1}%
     \z@{.7\linespacing\@plus\linespacing}{.5\linespacing}%
     {\bfseries      \centering
     }}
     \def\@secnumfont{\bfseries}
\newtheorem{theorem}{Theorem}[section]
\newtheorem{lemma}[theorem]{Lemma}
\newtheorem{proposition}[theorem]{Proposition}
\theoremstyle{definition}
\newtheorem{problem}[theorem]{Problem}
\newtheorem{remark}[theorem]{Remark}
\theoremstyle{remarks}
\newtheorem{remarks}[theorem]{Remarks}
\numberwithin{equation}{section}
\def \a{{\alpha}}
\def \b{{\beta}}
\def \D{{\Delta}}
\def \d{{\delta}}
\def \e{{\varepsilon}}
\def \g{{\gamma}}
\def \k{{\kappa}}
\def \l{{\lambda}}
\def \o{{\omega}}
\def \p{{\varphi}}
\def \t{{\vartheta}}
\def \m{{\mu}}
\def \s{{\sigma}}
\def \N{{\bf N}}
\def \P{{\bf P}}
\def \qq{{\qquad}}
\def \R{{\bf R}}
 \def \Z{{\bf Z}}
 \def \dd{{\rm d}}
\def \noi{{\noindent}}
\def\E{{\mathbb E \,}}
\def\P{{\mathbb P}}
\def\R{{\mathbb R}}
\def\Z{{\mathbb Z}}
\def\N{{\mathbb N}}
\def\C{{\mathbb C}}
\font\sevenrm =cmr10 at  7  pt
      \title[Local Limit Theorems]
 {Local Limit Theorems in some Random models from  Number Theory}
  \author{
Rita   Giuliano  and    Michel   Weber}\address{IRMA, UMR 7501, Universit\'e
Louis-Pasteur,   7  rue Ren\'e Descartes, 67084
Strasbourg Cedex, France.
 E-mail:    {\tt  michel.weber@math.unistra.fr
 }}
  \address{Dipartimento di Matematica, Via F. Buonarroti  2, 56127 Pisa, Italy.   E-mail: {\tt  giuliano@dm.unipi.it} }
\begin{document}


\begin{abstract}We study the local limit theorem for weighted sums of Bernoulli  variables. We show on examples that this  is an important question  in the general theory of the local limit theorem, and which turns up to be not  well explored. The examples we consider  arise from standard random models used in arithmetical number theory.    We next use the characteristic function method to prove new  local limit theorems for weighted sums of Bernoulli  variables.  Further, we give an application  of the almost sure local limit theorem to a representation problem in additive number theory due to Burr, using an appropriate random model. We also give a simple example showing that  the local limit theorem, in its standard form, fails to be sharp enough for estimating the probability $\P\{S_n\in E\}$ for   infinite sets of integers  $E$,  already in the simple case where $S_n$ is a sum of $n$  independent standard Bernoulli random variables and $E$ an arithmetic progression. \end{abstract}


\maketitle



\def\ddate {\sevenrm \ifcase\month\or January\or
February\or March\or April\or May\or June\or July\or
August\or September\or October\or November\or December\fi\! {\the\day}, \!{\sevenrm\the\year}}


\renewcommand{\thefootnote}{} {{
\footnote{2010 \emph{Mathematics Subject Classification}: Primary: 60F15, 60G50 ; Secondary:
60F05.}
\footnote{\emph{Key words and phrases}: independent random variables,
lattice distributed,  Bernoulli part,  local limit theorem, almost sure local limit theorem, effective remainder, random walk in random scenery.  \par  \sevenrm{[AS.LLT]1} \ddate{}}
 \renewcommand{\thefootnote}{\arabic{footnote}}
\setcounter{footnote}{0}

\section{Introduction.}\label{s1}
This work is devoted to the study of the local limit theorem and of its recent developments, in the context of some standard random models used in arithmetical number theory.  
It is also somehow completing the recent paper \cite{GW}. We will be mainly interested in studying the local limit theorem for weighted sums of Bernoulli  variables. As it will be clarified soon, this   turns up to be a fundamental question  in the  local limit theorem theory.
 We first recall some basic results and the used methods.  The local limit theorem was established  already three centuries ago  in the binomial case by De Moivre and Laplace around 1730. Based on Stirling approximation formula of $n!$, it is   a very precise result for moderate deviations.   \begin{lemma}\label{moivre}  Let
$0<p<1$,
$q=1-p$. Let $X$ be such that
$\P\{X=1\}=p=1-\P\{X=0\}
$. Let
$X_1, X_2,\ldots$ be independent copies of
$X$ and let $S_n=X_1+\ldots +X_n$. Let   $0<\g<1$ and let
 $ \b \le \g\sqrt{  pq}\, n^{1/3} $. Then for all $k$ such that
letting  $ x= \frac{k-np}{\sqrt{  npq}} $, $|x|\le \b n^{1/6}$,
we have
 \begin{eqnarray*} \P\{S_n=k\}  &=&   \frac{e^{-  \frac{x^2}{  2  }} }{\sqrt{2\pi npq}} \   e^E ,\end{eqnarray*}
with
 $|E|\le  \frac{ |x|^3}{\sqrt{ npq}}+ \frac{|x|^4 }{npq}+ \frac{|x|^3}{2(npq)^{\frac{3}{2}} } +  \frac{1}{ 4n\min(p,q)(1 -  \g      )}$.
   \end{lemma}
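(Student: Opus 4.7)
The plan is to start from the exact binomial expression $\P\{S_n=k\}=\binom{n}{k}p^kq^{n-k}$, apply Stirling's formula (in its effective Robbins form with a $1/(12m)$ remainder) to the three factorials, and Taylor expand the resulting logarithms after substituting $k=np+x\sqrt{npq}$. All deviations from the leading Gaussian form $e^{-x^2/2}/\sqrt{2\pi npq}$ are then collected into $E$ and bounded termwise, so that the contributions match the four listed pieces of $|E|$.

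\emph{Stirling step.} Writing $m!=\sqrt{2\pi m}(m/e)^m e^{\theta_m/(12m)}$ with $0<\theta_m<1$ and taking quotients yields
$$\binom{n}{k}p^kq^{n-k}=\frac{1}{\sqrt{2\pi n\,(k/n)(1-k/n)}}\exp\Bigl\{k\log\tfrac{np}{k}+(n-k)\log\tfrac{nq}{n-k}+R_{\mathrm{St}}\Bigr\},$$
with $|R_{\mathrm{St}}|\le \tfrac{1}{12n}+\tfrac{1}{12k}+\tfrac{1}{12(n-k)}$. The hypothesis $|x|\le\beta n^{1/6}\le \gamma\sqrt{pq}\,n^{1/2}$ gives $k\ge np(1-\gamma)$ and $n-k\ge nq(1-\gamma)$, so $|R_{\mathrm{St}}|\le \tfrac{1}{4n\min(p,q)(1-\gamma)}$, which is exactly the fourth term of $|E|$.

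\emph{Log expansion.} Set $u=x\sqrt{q/(np)}$ and $v=-x\sqrt{p/(nq)}$, so $k=np(1+u)$, $n-k=nq(1+v)$, with $|u|\le\gamma q<1$ and $|v|\le\gamma p<1$. Using $(1+w)\log(1+w)=w+\tfrac{w^2}{2}-\tfrac{w^3}{6}+\tfrac{w^4}{12}-\sum_{j\ge 5}\tfrac{(-w)^j}{j(j-1)}$, I would compute $-np(1+u)\log(1+u)-nq(1+v)\log(1+v)$: the first-order terms cancel, the second-order terms sum to $-x^2/2$ (giving the Gaussian exponent), the third-order terms sum to $\frac{x^3(q-p)}{6\sqrt{npq}}$ (bounded by $|x|^3/\sqrt{npq}$), and the fourth-order terms sum to $-\frac{x^4(p^3+q^3)}{12\,npq}$ (bounded by $x^4/(npq)$). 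The tail $j\ge 5$ is controlled by $|w|^5/(1-|w|)$, and using $|x|^2\le\gamma^2 npq$ it is absorbed into the first term $|x|^3/\sqrt{npq}$.

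\emph{Prefactor and closing.} The prefactor $(2\pi n(k/n)(1-k/n))^{-1/2}$ differs from $(2\pi npq)^{-1/2}$ by $(1+\delta)^{-1/2}$ with $\delta=(q-p)x/\sqrt{npq}-x^2/n$; expanding $-\tfrac12\log(1+\delta)$ and combining its contribution with the third-order log term produces, after simplification, a piece of size $O(|x|^3/(npq)^{3/2})$ (the third listed term) together with further $O(x^4/(npq))$ and $O(x^2/(npq))$ pieces absorbed into the second listed term. The principal obstacle is purely bookkeeping: one must verify that each of the many small contributions from (i) the Stirling remainder, (ii) the logarithmic tails for both $u$ and $v$, and (iii) the prefactor expansion can be bounded crudely by one of the four terms on the right-hand side. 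The precise range $|x|\le \beta n^{1/6}$ with $\beta\le \gamma\sqrt{pq}\,n^{1/3}$ is tuned so that $|u|,|v|\le\gamma<1$ and so that $|x|^3/\sqrt{npq}$, the dominant error, remains small.
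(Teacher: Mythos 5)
Your plan (exact binomial, effective Stirling bound, Taylor expansion in $x$ after setting $k=np+x\sqrt{npq}$) is the standard route; the paper gives no explicit argument, only the remark that the lemma is ``easily extrapolated'' from the Chow--Teicher proof, and your Stirling step and the expansion of $-np(1+u)\log(1+u)-nq(1+v)\log(1+v)$ are correctly carried out. The gap is in the ``Prefactor and closing'' paragraph. With $u=x\sqrt{q/(np)}$ and $v=-x\sqrt{p/(nq)}$ one has $(1+u)(1+v)=1+\delta$ where $\delta=\frac{(q-p)x}{\sqrt{npq}}-\frac{x^2}{n}$, so $-\tfrac12\log(1+\delta)$ begins with $-\frac{(q-p)x}{2\sqrt{npq}}$, a term \emph{linear} in $x$ of size $\frac{|q-p|\,|x|}{2\sqrt{npq}}$. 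This does not cancel against the cubic log term $\frac{(q-p)x^3}{6\sqrt{npq}}$: their sum is $\frac{q-p}{\sqrt{npq}}\bigl(\frac{x^3}{6}-\frac{x}{2}\bigr)$, which for $|x|\lesssim 1$ is dominated by the linear part. So the claim that the prefactor plus the third-order log term produce something of size $O(|x|^3/(npq)^{3/2})$ is false.

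The consequence is that your bookkeeping cannot close as written. For $p\ne q$ and $1/\sqrt{npq}\lesssim |x|\lesssim 1$ the linear piece $\frac{|q-p|\,|x|}{2\sqrt{npq}}$ exceeds $|x|^3/\sqrt{npq}$, is far larger than $|x|^4/(npq)$ and $|x|^3/(npq)^{3/2}$, and, since it scales like $|x|/\sqrt n$, it also exceeds the Stirling piece $\frac{1}{4n\min(p,q)(1-\gamma)}$ once $|x|$ is not tiny. A concrete check: take $p=0.9$, $q=0.1$, $n=10^4$, $k=np+1$, so $x=1/30$; then $E\approx 3.6\times 10^{-4}$ (linear prefactor piece $\approx 4.44\times 10^{-4}$, Stirling piece $\approx -8.4\times 10^{-5}$, everything else below $10^{-6}$), whereas the stated bound on $|E|$ is at most about $2.5\times 10^{-4}$ for any admissible $\gamma$. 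You would therefore need an extra error term of order $|q-p|\,|x|/\sqrt{npq}$ (or an extra hypothesis such as $p=q$ or $|x|\ge\sqrt 3$), and the numerical check suggests that the lemma's listed bound on $|E|$ itself appears to require such a correction when $p\ne\tfrac12$.
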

This slightly more precise formulation than the one given in Chow and Teicher \cite{CT}, p. 46,  is easily extrapolated  from their proof. 
More generally, let $  \widetilde
X=\{X_n , n\ge 1\}$ be a sequence of independent, square integrable
random variables taking values in a common lattice $\mathcal L(v_{
0},D )=\{v_{ 0}+D k, k\in \Z\}$, where
 $v_{0} $ and $D >0$ are   real numbers.
Let also $M_n=\sum_{j=1}^n\E X_j$,
$\Sigma_n=\sum_{j=1}^n{\rm Var}( X_j)$. We say that  $\widetilde X$ satisfies a local limit theorem if
 \begin{equation}\label{llt}  \D_n:=  \sup_{N=v_0n+Dk }\Big|   \sqrt{\Sigma_n} \P\{S_n=N\}-{D\over  \sqrt{ 2\pi } }e^{-
{(N-M_n)^2\over  2 \Sigma_n} }\Big| = o(1).
\end{equation}

This 
 fine limit theorem  has connections with Number Theory, see for instance
  Postnikov  \cite{Po}.  If  $\widetilde X$ is an i.i.d. sequence,  then \eqref{llt} holds if and only if the   \lq\lq {\it span}\rq\rq  $D$ is maximal ($D = \sup\big\{d>0;   \exists a\in
\Z:
\P\{X \in a+d\Z\}=1\big\} $). This is Gnedenko's well-known result, which  is also optimal (Matskyavichyus
\cite{Mat}).  Under stronger integrability conditions, the remainder term can be improved (see \cite{IL} Theorem
4.5.3), \cite{[P]} Theorem 6 p.197). 
The general form of the local limit theorem
(\cite{IL}, Th. 4.2.1) for i.i.d. random variables, states
\begin{theorem}\label{th:G4}
  In order that  for some choice of constants $a_n$ and $b_n$
$$\lim_{n \to \infty}\sup_{N \in \mathcal{L}(v_0n, D)}\Big|\frac{b_n}{\lambda}\P\{S_n=N\}-g\big( \frac{N-a_n}{b_n}\big)\Big|=0, $$
where $g$ is the density of some stable distribution $G$ with exponent $0< \alpha \leq 2$,
 it is necessary and sufficient that
$$ {\rm (i)}\ \
 \frac{S_n-a_n}{b_n} \buildrel{\mathcal D}\over{\Rightarrow}   G   \ \
\hbox{as $n \to \infty$} \qq\qq
    {\rm (ii)}\ \   \hbox{$D $ is maximal}.$$
\end{theorem}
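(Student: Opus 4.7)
My plan is a standard characteristic-function argument adapted to lattice distributions. Writing $\phi(t)=\E e^{itX}$, Fourier inversion on the circle gives, for every $N\in \mathcal{L}(v_0 n,D)$,
$$\P\{S_n=N\}=\frac{D}{2\pi}\int_{-\pi/D}^{\pi/D}e^{-itN}\phi(t)^n\,dt,$$
while the stable density satisfies $g(x)=\frac{1}{2\pi}\int_\R e^{-iux}\hat g(u)\,du$. Changing variables $u=tb_n$ and inserting the centering factor $e^{\pm iua_n/b_n}$, the quantity $\frac{b_n}{D}\P\{S_n=N\}-g((N-a_n)/b_n)$ becomes a single Fourier integral in $u$, whose supremum over $N\in \mathcal{L}(v_0 n,D)$ is majorized, up to $1/(2\pi)$, by
$$R_n:=\int_{|u|\le \pi b_n/D}\bigl|e^{-iua_n/b_n}\phi(u/b_n)^n-\hat g(u)\bigr|\,du+\int_{|u|>\pi b_n/D}|\hat g(u)|\,du.$$

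For sufficiency, (i) and (ii) should together imply $R_n\to 0$. Since $b_n\to\infty$ in any stable attraction scheme and $\hat g\in L^1(\R)$, the tail integral vanishes. I split the main integral into three zones $|u|\le A$, $A<|u|\le \eps b_n$, $\eps b_n<|u|\le \pi b_n/D$. On the compact zone, assumption (i) gives pointwise convergence $e^{-iua_n/b_n}\phi(u/b_n)^n\to \hat g(u)$ and dominated convergence kills this piece for each fixed $A$. On the middle zone, stable attraction with index $\alpha$ produces a uniform bound $|\phi(u/b_n)^n|\le e^{-c|u|^\alpha}$; since also $|\hat g(u)|\le e^{-c|u|^\alpha}$, the contribution is dominated by $2\int_{|u|>A}e^{-c|u|^\alpha}\,du$, which is made arbitrarily small by choosing $A$ large. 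The outer zone is where maximality of $D$ is decisive: (ii) forces $|\phi(t)|<1$ for every $t\in(0,2\pi/D)$, and by continuity and compactness $\sup_{\eps\le|t|\le \pi/D}|\phi(t)|=1-\eta$ for some $\eta=\eta(\eps)>0$; hence $|\phi(u/b_n)^n|\le(1-\eta)^n$ on this range, and the geometric decay absorbs the $O(b_n)$ length of the integration interval.

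For necessity, suppose the LLT holds with normings $(a_n,b_n)$ and limit density $g$. Summation of $\P\{S_n=N\}$ over $N\in\mathcal{L}(v_0 n,D)\cap[a_n+b_n x,a_n+b_n y]$ is a Riemann sum of mesh $D/b_n\to 0$ for $\int_x^y g$, so the uniform $o(1)$ estimate translates into $\P\{(S_n-a_n)/b_n\in[x,y]\}\to \int_x^y g(u)\,du$, which is (i). For (ii), if $D$ were not maximal there would exist $t_0\in(0,2\pi/D)$ with $|\phi(t_0)|=1$; the support of $X$ would then lie in a proper sublattice of $\mathcal{L}(v_0,D)$, so $\P\{S_n=N\}$ would vanish on a positive fraction of points of $\mathcal{L}(v_0 n,D)$, while $g$ is strictly positive and continuous on its support, contradicting the uniform convergence demanded by the LLT.

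The step I expect to be the main obstacle is the middle zone $A<|u|\le \eps b_n$ of the sufficiency argument: proving the uniform bound $|\phi(u/b_n)^n|\le e^{-c|u|^\alpha}$ requires a Karamata-type regular-variation analysis of $1-\mathrm{Re}\,\phi(t)$ near $0$, furnished by the stable attraction hypothesis. This is the genuine technical core; once it is in place, the three-zone decomposition, together with the compactness argument driven by the maximality of $D$ in the outer zone, closes the proof in a routine way.
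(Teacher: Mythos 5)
The paper does not prove Theorem~\ref{th:G4}: it is quoted as a classical result and attributed to Ibragimov--Linnik (\cite{IL}, Th.~4.2.1), with no argument given. There is therefore no ``paper's own proof'' to compare against, and I will instead assess your sketch on its own terms.

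Your proposal follows exactly the textbook characteristic-function route that Ibragimov and Linnik themselves take: invert the characteristic function on the torus $[-\pi/D,\pi/D]$, rescale by $b_n$, and compare with the Fourier inversion of the stable density, so that the sup of the error over $N$ in the lattice is dominated by an $L^1$ Fourier error plus a tail of $\hat g$. The three-zone decomposition (compact zone controlled by weak convergence and dominated convergence, intermediate zone $A<|u|\le\eps b_n$ controlled by regular variation of $1-\mathrm{Re}\,\phi$, outer zone $\eps b_n<|u|\le\pi b_n/D$ controlled by compactness plus the strict inequality $|\phi(t)|<1$ on $(0,2\pi/D)$ furnished by maximality of $D$) is the right structure, and you have correctly identified the only genuinely technical step, namely the Karamata-type lower bound $n\,\mathrm{Re}\,(1-\phi(u/b_n))\ge c|u|^\alpha$ (up to a slowly varying correction) on the middle zone. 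One hidden point you should make explicit there: the pointwise bound $|\phi(t)|\le \exp\{-\tfrac12(1-|\phi(t)|^2)\}$ together with regular variation of $1-|\phi|^2=\E(1-\cos t(X-X'))$ is what actually yields the $e^{-c|u|^\alpha}$ majorant; for $\alpha=2$ with infinite variance the truncated second moment needs a separate (but standard) regular-variation argument. The necessity direction is also sound: the Riemann-sum argument delivers (i), and if $D$ were not maximal the support of $X$ would sit in a coarser lattice $v_0'+D'\Z$ with $D'$ an integer multiple of $D$, so $\P\{S_n=N\}$ would vanish on a positive density of points of $\mathcal L(v_0 n,D)$ while $g$ stays bounded below on compacts in the interior of its support, contradicting uniformity. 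In short: your sketch is correct in outline and coincides with the standard Ibragimov--Linnik proof; the one place to be careful in a full write-up is the middle-zone estimate, as you anticipated. Minor notational remark: the paper's statement has $b_n/\lambda$ where clearly $b_n/D$ is intended; you have silently (and correctly) repaired this.
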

 \vskip 2 pt 

There are essentially two  approaches used: the method of characteristic functions and the  Bernoulli part extraction method. In the later case, this method is called the        extraction method of the Bernoulli part  of a random
variable and  was developed by McDonald 
\cite{M},
 for proving   local limit theorems in presence of the central limit theorem.     Kolmogorov \cite{K}  (see also
Kolmogorov's interesting comment p.~29) initiated twenty years before a similar approach  in the study of L\'evy's concentration function.
We also mention Arratia,  Barbour and  Tavar\'e \cite{ABT1,ABT}  probabilistic approach in  the study of  the asymptotic behaviour of logarithmic combinatorial structures, and the recent work of  R\"ollin and Ross \cite{R} based on  Landau-Kolmogorov inequalities.
\vskip 2 pt 
An important problem inside the general study of the local limit theorem concerns the case when the considered sums are weighted sums of Bernoulli  variables, the "simple" case when the weights are increasing covering already non-trivial examples of random models used in number theory. The purpose of the next Section is to underline this in providing a few   examples of such models, 
which we believe, are challenging problems for probabilists. 

 Additionally,  for weighted sums of independent or i.i.d. random variables, the  Bernoulli part extraction method reduces the problem to the case of weighted sums of Bernoulli  variables,  thereby  making this case crucial too for the application of this method. 
 
\vskip 2 pt 
The goal of this work is to investigate the local limit theorem for weighted sums of Bernoulli  variables.
In Section \ref{s3}, we use the characteristic function method to prove new  local limit theorems. Next in Section \ref{s4}, we give an application of the almost sure local limit theorem to Burr's  representation problem in additive number theory, using an appropriate random model. Finally, we also give an example showing that  the standard form \eqref{llt} of the local limit theorem, fails to be sharp enough for estimating the probability $\P\{S_n\in E\}$ for   infinite sets of integers  $E$; and this   already in the  simple case where $S_n$ is a sum of $n$  independent standard Bernoulli random variables and $E$ an arithmetic progression. 



\section{Some Random Models in Number Theory.}\label{s2}
    \subsection{\it{A Probabilistic Model for the Dickman Function.}}  This function
originates from the study by Dickman of the asymptotic distribution of the 
  largest prime factor  $P^+(n)$  of a natural integer $n$.  He has shown that the limit 
\begin{equation} \label{difu}\lim_{n\to \infty} \frac{1}{n}\#\big\{k; 1\le k\le n : P^+(k)\le n^{1/u}\big\}= \rho(u)
\end{equation}
exists, and $\rho(u)$, called the Dickman Function, is defined as the continuous solution of the differential-difference equation
$$u\rho'(u) + \rho(u-1)=0, \qq \quad (u>1)  $$ 
with the initial condition $\rho(u)=1$ for $0\le u\le 1$. We have 
$\int_0^\infty \rho(v)\dd v=e^\gamma$, where $\gamma$ is Euler's constant.  This is a function of first importance in analytic number theory,
which has been thoroughly investigated by  
Hensley, Hildebrand, Tenenbaum notably, see \cite{T}   for more details.  
 
There is a probabilistic way of describing the Dickman Function. We refer to       Hwang and  Tsai \cite{HT}.  Let $X=\{X_j, j\ge 1\}$ be a sequence of independent random
variables such that
\begin{equation}\label{dipro}\begin{cases}\P\{ X_j=j\}= j^{-1}\cr 
\P\{ X_j=0\}= 1-j^{-1}.
\end{cases}\qq (j\ge 1)\end{equation}
\begin{proposition}\label{dipropo}Let $D_n= \sum_{j=1}^n X_j$. Then
$$\lim_{n\to \infty} \P\big\{ n^{-1}D_n<x\big\}= e^{-\gamma}\int_{0}^x \rho(v) \dd v \qq \quad (x>0). $$
  \end{proposition}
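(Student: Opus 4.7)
The plan is to apply the continuity theorem for Laplace transforms to the non-negative variables $Y_n := n^{-1}D_n$. By independence,
\[
\phi_n(s) := \E e^{-sY_n} = \prod_{j=1}^n\Bigl(1-\frac{1-e^{-sj/n}}{j}\Bigr), \qquad s\ge 0.
\]
Taking logarithms and expanding $\log(1-x)= -x + O(x^2)$, the leading first-order contribution is
\[
-\sum_{j=1}^n \frac{1-e^{-sj/n}}{j}\ =\ -\sum_{j=1}^n \frac{1-e^{-s(j/n)}}{j/n}\cdot\frac{1}{n},
\]
which is a Riemann sum for $\int_0^1 u^{-1}(1-e^{-su})\,\dd u = \int_0^s t^{-1}(1-e^{-t})\,\dd t$ (change of variable $t=su$). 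The quadratic and higher-order remainder is bounded by $\sum_{j=1}^n j^{-2}(1-e^{-sj/n})^2$; splitting the sum at $j=\lfloor\sqrt{n}\rfloor$ and using $1-e^{-x}\le x$ on the low end and $1-e^{-x}\le 1$ on the high end shows it tends to $0$. Hence
\[
\lim_{n\to\infty}\phi_n(s) = \Psi(s) := \exp\Bigl(-\int_0^s\frac{1-e^{-t}}{t}\,\dd t\Bigr).
\]

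The next step is to identify $\Psi$ as the Laplace transform of the probability density $e^{-\gamma}\rho$ on $(0,\infty)$. Set $\psi(s) := \int_0^\infty e^{-sv}\rho(v)\,\dd v$. Multiplying the defining relation $v\rho'(v) + \rho(v-1) = 0$ (valid for $v>0$ with the convention $\rho\equiv 0$ on $(-\infty,0)$) by $e^{-sv}$, integrating on $(0,\infty)$, and integrating by parts (splitting the integral at $v=1$ to accommodate the jump of $\rho'$, and using the super-exponential decay of $\rho$ to kill boundary terms at infinity) reduces the matter to the linear ODE
\[
s\,\psi'(s) = -(1-e^{-s})\,\psi(s),\qquad \psi(0) = \int_0^\infty\rho(v)\,\dd v = e^\gamma,
\]
whose unique solution is $\psi(s) = e^\gamma\,\Psi(s)$. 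Therefore $\Psi$ is the Laplace transform of the probability measure with density $e^{-\gamma}\rho(v)\mathbf{1}_{\{v>0\}}$.

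The continuity theorem for Laplace transforms of non-negative random variables then gives the weak convergence of $Y_n$ to a variable with this density. Since the limit distribution function is continuous on $(0,\infty)$, pointwise convergence
\[
\P\{Y_n < x\}\ \longrightarrow\ e^{-\gamma}\int_0^x\rho(v)\,\dd v
\]
holds at every $x>0$, which is the desired conclusion. The main technical obstacle is the identification step: handling the integration by parts across the discontinuities of $\rho'$ at integer points, and invoking the classical decay $\rho(v)=o(v^{-k})$ for every $k$ (see \cite{T}) to discard the infinite boundary terms. The asymptotic analysis of $\phi_n$ itself is comparatively routine once the Riemann-sum structure is spotted.
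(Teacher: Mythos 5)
The paper states this proposition without proof, referring the reader to Hwang and Tsai \cite{HT} for the probabilistic model of the Dickman function; there is therefore no in-paper argument to compare against. Your Laplace-transform proof is correct and complete. The product formula for $\phi_n(s)$ is right, the Riemann-sum identification of the leading term of $\log\phi_n(s)$ with $-\int_0^1 u^{-1}(1-e^{-su})\,\dd u = -\int_0^s t^{-1}(1-e^{-t})\,\dd t$ is valid (the integrand is bounded near $0$ since $(1-e^{-su})/u \to s$), and the error term $\sum_{j\le n} j^{-2}(1-e^{-sj/n})^2$ does vanish under the split at $j=\lfloor\sqrt n\rfloor$: the low range contributes $O(s^2/n^{3/2})$ and the high range $O(n^{-1/2})$. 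The identification step is also sound: using $\rho\equiv 0$ on $(-\infty,0)$, the shift $v\mapsto v-1$ gives $\int_0^\infty\rho(v-1)e^{-sv}\,\dd v = e^{-s}\psi(s)$, and integration by parts of $\int_0^\infty v\rho'(v)e^{-sv}\,\dd v$ yields $-\psi(s)-s\psi'(s)$, hence $s\psi'(s)=-(1-e^{-s})\psi(s)$. Since $(1-e^{-s})/s$ extends continuously to $s=0$, the ODE is regular and the solution with $\psi(0)=\int_0^\infty\rho = e^\gamma$ is unique, giving $\psi = e^\gamma\Psi$. A small remark: you need not split the integration by parts at $v=1$, since $\rho$ is absolutely continuous (it is $\rho'$, not $\rho$, that jumps there), though doing so is harmless and leads to the same identity. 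The final passage from weak convergence to the stated pointwise limit for $\P\{n^{-1}D_n < x\}$ at every $x>0$ is justified exactly as you say, by the continuity of the limiting distribution function. This is essentially the classical route taken in \cite{HT}.
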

 
Arratia, Barbour and Tavar\'e 
\cite{ABT}, Corollary 2.8 proved a (restricted)  local limit theorem for $D_n$
\begin{equation}\label{dilo}\lim_{n\to \infty} n\P\{D_n=k_n\}= e^{-\gamma}\rho(x), \qq \hbox{when  $\lim_{n\to \infty} k_n/n= x>0$.}
\end{equation}
The almost sure local limit theorem was recently established in   Giuliano, Szewczak and Weber in \cite{GSW}. The proof is essentially based on a long and delicate study of the related correlations functions. A proof of the local limit theorem in the form (\ref{dilo}) using only characteristic functions is also given, correcting the one indicated \cite{HT}. 
No local limit theorem for $X$ (in the sense of \eqref{llt}) is known.
 \begin{remarks}\label{rm1} \rm  (i) Hensley \cite{H} has shown that the limiting law is infinitely divisible. In the same paper, he also constructed another very interesting probabilistic
model, adapted to the "psixiology" i.e. to functions $\Psi, \Phi$ linked to $P^+, P^-$.
\vskip 1 pt
\noi (ii) Obviously $D_n$ also reads  as $D_n=\sum_{j=1}^n  j\b_j$ where  
$\{\b_j, j=1,\ldots n\}$ are independent Bernoulli random variables such that \begin{equation}\label{dipro}\begin{cases}\P\{ \b_j=1\}= j^
{-1}\cr 
\P\{ \b_j=0\}= 1-j^{-1}.\end{cases}\qq (j=1,\ldots , n)\end{equation}
 \noi (iii)  Let $Z_1,
\ldots Z_n$  be independent Poisson distributed random variables with intensity  $\E Z_j= 1/j$, and let
$T_n=\sum_{j=1}^n  jZ_j$. Then we have the  exact formula 
$\P\{T_n =n\}= e^{-\sum_{j=1}^n 1/j}$, based on Cauchy formula for cycles of permutations (\cite{ABT1}, formula (1.2)).
\vskip 1 pt
\noi    (iv) 
 \label{Ver}  Vervaart has shown that independent Bernoulli random variables
can be embedded into a Poisson process (see \cite{V}, Chapter 4).  
 \end{remarks}

  
  \subsection{\it A Diophantine Equation.} \label{sade}
 Let $\mathcal N=\{\nu_0, \ldots, \nu_P \}$ be a finite set    of integers.
    Consider the
diophantine equation 
 \begin{equation}\label{Deq01}  x_1+ \ldots +  x_n=  y_1+ \ldots +  y_n,  \end{equation}  
  in which the unknown $x_i, y_j$, $1\le i,j\le n$, are subject to belong   to     $\mathcal N$.
   Let $N_n(\mathcal N) $ denote the number  of  
$2n$-uples $(x_1, \ldots, x_n, y_1,
\ldots, y_n)\in   \mathcal N^{2n}  $  which satisfy  (\ref{Deq01}). 
 
\vskip 3pt

    Examine the basic case
 $  \mathcal N=\{0, \ldots ,P-1\}  $ and note $N_n(P)=N_n(\mathcal N)$. Recall the approach used in \cite{Po} \S2.4.  
Let    $X$ be a random variable defined   by
  \begin{equation*}   \P\{X=k\}  =
  \begin{cases} {P-|k|\over P^2}         &  \qq {\rm if} \ 0\le |k|<P, \cr   
 0          &   \qq {\rm if} \ |k|\ge P.  \end{cases}     \end{equation*} 
  We easily verify that $\E X= 0$,  $\s^2=\E X^2= \frac{ P^2-1   }{6}$ and $\E |X|^3\le CP^3$. Moreover, $\E e^{2i\pi tX}= (1/P)  F_{P-1}(2\pi t)   $ where $F_m$ is the Fej\'er kernel,
$$  F_m(u) ={1\over m+1} \Big({\sin   \frac{m+1}{2}u\over
\sin \frac{u}{2}}\Big)^2 . $$
Note that if  $u(k)$
is  the number of solutions of the equation    $x-y=k$, $0\le x\le P-1$, 
$0\le y\le P-1$, then   $u(k)= P-|k|$ if $|k|< P$, and $u(k)=0$ if $|k|\ge P$.  So that in turn $ \P\{ X=k\}={u(k)\over P^2}$, $k\in \Z$. Let $X_1,\ldots, X_n$ be independent copies of $X$ and note $S_n= X_1+\ldots +X_n$. As   $ (x_1-y_1)+ \ldots +
(x_n-y_n)=0$ if and only if  
$  x_1-y_1=k_1, 
\ldots ,x_n-y_n=k_n$, for some integers $k_j$ verifying $ k_1 + \ldots + k_n =0$, we have
 \begin{eqnarray*}\label{sol1}\P\{ S_n=0\}
 &=& 
 \sum_{k_1+\ldots +k_n=0\atop |k_i|< P} \P\big\{X_1=k_1\big\}\ldots  \P\big\{X_n=k_n\big\} \ =\
{N_n(P)\over  P ^{2n}}.
\end{eqnarray*} 
    We have, as  a direct consequence of the  approximate local limit theorem with effective remainder given in \cite{GW}, Corollary 1.8,
\begin{equation}\label{ts01}     
   { N_n(P)\sqrt n\over  P^{2n-1} } =   \sqrt {3/\pi} +\mathcal O\Big(\frac{1}{P^2} +\frac{P   }{ \sqrt  n 
      }   \Big)  ,\end{equation}
uniformly over $n,P$ such that for  $n\ge
C P^2$. 
 
\begin{remark} (i) As  $ \P\{S_n=0\}=\int_{0}^1   |{\sin P\pi t\over
P\sin \pi t}|^{2n}\dd t$,  it is easy to bound
from below   
$N_n(P)$ by $   C { P ^{2n-1}/\sqrt{ n  } }$  and to get the upper bound  $   C_\e { P ^{2n-1+\e}/\sqrt{ n  } }$, for any $\e>0$, uniformly in $P$
and $n$.   
See for instance  \cite{W4}, inequality (2.3).  
\end{remark}  In fact, one \lq\lq can\rq\rq take $\e=0$. 
\begin{theorem}[\cite{AGW}, Th. 2.1] \label{agw}There exist  absolute constants $C',C''$
such that for  any positive integers
$P$ and
$n$,
\begin{equation}       C'\, { P^{2n-1}\over  \sqrt n}\le  N_n(P) \le  C''\, { P ^{2n-1}\over\sqrt{ n  } } .
\end{equation}
\end{theorem}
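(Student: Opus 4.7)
The plan is to exploit the integral representation
$$N_n(P)=P^{2n}\int_0^1 f_P(t)^{2n}\,\dd t,\qquad f_P(t)=\frac{\sin P\pi t}{P\sin\pi t},$$
recalled in Remark 2.2 (so that $f_P^2$ is the characteristic function of $X$), together with the symmetry $f_P(1-t)^2=f_P(t)^2$ which reduces the problem to the interval $[0,1/2]$. The case $P=1$ being trivial, I assume $P\ge 2$.

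For the \emph{lower bound} I would restrict the integral to $[0,\eta/(P\sqrt n)]$ for a small absolute $\eta$. The near-zero expansion $f_P(t)^2=1-(P^2-1)\pi^2 t^2/3+O((Pt)^4)$ yields $f_P(t)^2\ge 1-c_0\eta^2/n$ on that interval, so $f_P(t)^{2n}\ge e^{-2c_0\eta^2}$, a positive absolute constant. Integration produces a contribution of order $1/(P\sqrt n)$, hence $N_n(P)\ge C'P^{2n-1}/\sqrt n$.

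The \emph{upper bound} is the delicate point, since the elementary argument indicated in Remark 2.2 loses a factor $P^\e$ and the present theorem asserts $\e=0$ is admissible. I would split $[0,1/2]$ at $t=1/P$. The key pointwise estimate is the uniform quadratic decay
$$1-f_P(t)^2\ge c_1(Pt)^2,\qquad 0\le t\le 1/P,$$
with an absolute $c_1>0$. Granted this, $f_P(t)^{2n}\le e^{-c_1 n(Pt)^2}$, so after the change of variables $s=Pt$,
$$\int_0^{1/P}f_P(t)^{2n}\,\dd t\le\frac{1}{P}\int_0^\infty e^{-c_1 n s^2}\,\dd s\le \frac{C}{P\sqrt n}.$$
On $[1/P,1/2]$ the elementary bound $\sin\pi t\ge 2t$ gives $|f_P(t)|\le 1/(2Pt)$, and
$$\int_{1/P}^{1/2}(2Pt)^{-2n}\,\dd t\le \frac{1}{P\cdot 4^n(2n-1)},$$
which is exponentially smaller than $1/(P\sqrt n)$. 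Adding the two pieces yields the desired $\P\{S_n=0\}\le C/(P\sqrt n)$, hence $N_n(P)\le C''P^{2n-1}/\sqrt n$.

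The main obstacle is establishing the pointwise inequality $1-f_P(t)^2\ge c_1(Pt)^2$ \emph{uniformly} on $[0,1/P]$, not merely near the origin. Factoring,
$$P^2\sin^2\pi t-\sin^2 P\pi t=(P\sin\pi t-\sin P\pi t)(P\sin\pi t+\sin P\pi t),$$
both factors are non-negative on $[0,1/P]$ (for the first, note that $P\sin x\ge \sin Px$ on $[0,\pi/P]$, as follows from comparing derivatives since $\cos x\ge\cos Px$ there). A Taylor expansion at the origin together with the endpoint estimate $P\sin(\pi/P)\ge 2$ shows that the first factor is $\ge c\,P(P^2-1)t^3$ and the second $\ge c'Pt$ throughout $[0,1/P]$, so the numerator is $\ge c''P^2(P^2-1)t^4$ while the denominator satisfies $P^2\sin^2\pi t\le P^2\pi^2 t^2$. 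Dividing and using $P\ge 2$ to absorb $P^2-1$ by $P^2$ produces the claimed quadratic decay, completing the argument.
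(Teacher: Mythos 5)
The paper does not reproduce a proof of this theorem: it is quoted from \cite{AGW} with only the comment that the proof there rests on ``finer bounds of the previous Fej\'er integrals, requiring more elaborated calculations.'' A step-by-step comparison is therefore not possible; but your strategy --- starting from the identity $N_n(P)=P^{2n}\int_0^1 f_P(t)^{2n}\,\dd t$ (the representation invoked in the remark immediately preceding the theorem), splitting the Fej\'er integral at $t=1/P$, and proving a quadratic decay of $f_P^2$ below its first zero so as to avoid the $P^\e$ loss --- is exactly the natural route and manifestly the one the paper alludes to.

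There is, however, one genuine gap in your argument as written, and it sits at the delicate point you yourself flag. You assert the uniform lower bound $P\sin\pi t-\sin P\pi t\ge c\,P(P^2-1)t^3$ on $[0,1/P]$, and justify it only by ``a Taylor expansion at the origin together with the endpoint estimate $P\sin(\pi/P)\ge 2$.'' Verifying an inequality near $0$ and at the right endpoint does not establish it on the whole interval; some structural input is needed in between. In fact you already have it in hand: setting $\phi(u):=P\sin u-\sin Pu$, you observed that $\phi'(u)=P(\cos u-\cos Pu)\ge 0$ on $[0,\pi/P]$, i.e.\ $\phi$ is increasing there, though you only used this to conclude $\phi\ge 0$. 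Combined with the Taylor bound $\phi(u)\ge\tfrac1{12}P(P^2-1)u^3$ valid on $[0,\delta/P]$ for a small absolute $\delta$, monotonicity gives, for $u\in[\delta/P,\pi/P]$, $\phi(u)\ge\phi(\delta/P)\ge\tfrac{\delta^3}{12}(P^2-1)/P^2\ge\tfrac{\delta^3}{12\pi^3}\,P(P^2-1)u^3$, closing the gap. (A cleaner route that avoids Taylor remainders entirely: $\phi'(u)=2P\sin\tfrac{(P+1)u}{2}\sin\tfrac{(P-1)u}{2}$, and since $(P+1)u/2\le 3\pi/4$ and $(P-1)u/2\le\pi/2$ for $u\le\pi/P$, $P\ge2$, the bound $\sin x\ge\tfrac{2\sqrt2}{3\pi}x$ on $[0,3\pi/4]$ gives $\phi'(u)\ge c\,P(P^2-1)u^2$ with absolute $c>0$, whence the cubic bound on $\phi$ by integration.) With this step repaired, the remainder of your argument --- the $e^{-c_1n(Pt)^2}$ bound on $[0,1/P]$ yielding $O(1/(P\sqrt n))$ after the substitution $s=Pt$, the exponentially small tail from $[1/P,1/2]$ via $|f_P(t)|\le 1/(2Pt)$, and the matching lower bound from $[0,\eta/(P\sqrt n)]$ --- is correct and gives the theorem for $P\ge 2$. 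Note finally that the case $P=1$ is not merely ``trivial'' but must be excluded (or the statement read with $P\ge2$ as a standing convention): $N_n(1)=1$ while $P^{2n-1}/\sqrt n=1/\sqrt n$, so the claimed upper bound fails for large $n$ when $P=1$.
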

 The proof   depends on  finer bounds of the previous Fej\'er integrals,  requiring more elaborated calculations.   
 
\begin{remark} We don't exactly know how  the normalized ratios
 $ { N_n(P)\over  P^{2n} }$ behave when $n$ and $P$ vary simultaneously; a question which  is tightly related to the variation properties  of powers of the Fej\'er kernels $\{F^n_{P_j}(u) , j\ge 1\}$ for growing sequences $\{P_j, j\ge 1\}$.
\end{remark} 
   
\subsection{\it  Freiman-Pitman's Probabilistic Model of  the Partition Function.}\label{sfp}
This is probably the most informative example. Let $q_m(n)$, $m\le n$,  denote 
  the number of partitions of $n$ into distinct parts, each of which is at least $m$, namely the number of ways to express $n$ as 
\begin{equation} n= i_1+ \ldots +i_r, \qq \qq m\le i_1<\ldots <i_r\le n.\end{equation}
 Let $X_m, \ldots , X_n$ be independent random variables defined by 
\begin{equation}\label{pvfp} \P\{X_j =0\}= \frac{1}{1+e^{-\s j}}, \qq\qq \P\{X_j =j\}= \frac{e^{-\s j}}{1+e^{-\s j}}.\end{equation}
  The random variable $Y= X_m + \ldots + X_n$ can serve to modelize the partition function  $q_m(n)$.  There is a one-to-one correspondence between the number of partitions of $n$ of the required type and the number of vectors $(x_m, \ldots, x_n)$ with $x_j=0$ or $1$ such that 
$ m x_m+\ldots + nx_n=n $.
Notice that 
\begin{eqnarray*}    & &e^{\s n}  \int_0^1 \prod_{j=m}^n \big(1 + e^{-\s j} e^{2i\pi \a j}\big)e^{-2i\pi \a n} \dd  \a \cr 
&=&  e^{\s n}\sum_{  x_j\in\{0,1\}\atop m\le j \le n}  \int_0^1   e^{-\s (m x_m+\ldots + nx_n)}   e^{-2i\pi(m x_m+\ldots + nx_n-n) \a  } \dd  \a 
\cr 
&=&  e^{\s n}\sum_{  x_j\in\{0,1\}\atop m\le j \le n}     e^{-\s n}   \chi\{m x_m+\ldots + nx_n=n\}
\ = \   q_m(n).  \end{eqnarray*}
Hence the  formula (in which $\s$   only appears in the right-hand side)
\begin{equation}   \label{qmn} q_m(n) = e^{\s n} \int_0^1 \prod_{j=m}^n \big(1 + e^{-\s j} e^{2i\pi \a j}\big)e^{-2i\pi \a n} \dd  \a .  \end{equation}
   This also implies (letting $\p(t) =\E e^{2i\pi tY}$ be the characteristic function of $Y$)
\begin{eqnarray*}    q_m(n)& = &e^{\s n}\Big( \prod_{j=m}^n  ( {1+e^{-\s j}})\Big)\int_0^1 \prod_{j=m}^n \Big(\frac{1 + e^{-\s j} e^{2i\pi \a j}}{1+e^{-\s j}}\Big)e^{-2i\pi \a n} \dd  \a 
\cr &=&  e^{\s n}\Big( \prod_{j=m}^n  ( {1+e^{-\s j}})\Big)\int_0^1    \p (\a)e^{-2i\pi \a n} \dd  \a
\ = \  e^{\s n}\Big( \prod_{j=m}^n  ( {1+e^{-\s j}})\Big)\P\{Y=n\} .\end{eqnarray*}
    In \cite{FP} p.\,387 and 389, the authors noticed that  an appropriate local limit theorem 
would  allow to write $ \P\{Y=n\} \sim   e^{-(\E Y-n)^2/(2{\rm Var}(Y))}/{\sqrt{2\pi{\rm Var}(Y)}}$. 
 Choosing $\s$ as being the unique solution of the equation $\E Y=\sum_{j=m}^n \frac{j}{1+e^{\s j}}= n$
would then give 
$  \P\{Y=n\} \sim {1}/{B\sqrt{2\pi }} $, and by reporting 
\begin{eqnarray*}    q_m(n) \sim   
  e^{\s n}\Big( \prod_{j=m}^n  ( {1+e^{-\s j}})\Big) \frac{1}{B\sqrt{2\pi }} .  \end{eqnarray*}
In place,  Freiman and Pitman   directly estimated the integral  in        \eqref{qmn}   in a 
 long delicate  work \cite{FP}. 
 \begin{remark}  
 By Euler's pentagonal theorem, $q_0(n)$   appears as  a coefficient in the expansion of $\prod_{k\le n}(1+e^{ik\theta})$.  
\end{remark} 
 \subsection{} The basic problem illustrated by the previous examples states as follows.  
\begin{problem}\label{p1}Let $\{k_j, j\ge 1\}$ be  an increasing sequence of positive integers and $\{p_j, j\ge 1\}$ be a sequence of reals in $]0,1[$.  Describe the CLT
and LLT for the sequence
$S_n= k_1\b_1+
\ldots + k_n\b_n$, $n\ge 1$,  where 
$\b_j$ are independent Bernoulli random variables defined by 
\begin{equation}\label{dipro}\begin{cases}\P\{ \b_j=1\}= p_j\cr 
\P\{ \b_j=0\}= 1- p_j.
\end{cases}\qq (j\ge 1)\end{equation}
\end{problem}
In the Freiman-Pitman
model, the system of independent random variables varies with the choice of the integer. And so there is, properly speaking, no
central limit theorem involved  and thereby no local limit theorem either, except when  placing the problem in the setting
of triangular arrays. Corresponding forms of the central limit theorem exist. As to suitable versions of the local limit theorem for
triangular arrays  {\it with} remainder term, we don't know whether such a result exists in the litterature. Thus it makes sense to also consider a "local" version of the previous problem. 
\begin{problem}[Finite version]\label{p2}
 To obtain effective sharp   estimates of 
\begin{equation*} \P\{ S_n = N\}.\end{equation*}
 We refer to \cite{GW} where this question is  investigated. \end{problem} 
Returning to  the Freiman-Pitman
model, we   observe that the relevant question  rather concerns the search of   sharp  estimates of 
$\P\{S_n=0\}$ (namely of $\int_{-1/2}^{1/2}\E e^{2i \pi  t S_n}\dd t$), the random variables being centered, than working out a local limit theorem, which
is quite another problem. Nevertheless, this model, as well as  others previously reviewed, sheds light on   limitations 
 to the domain of validity of the local limit
theorem, in a quite informative way. 

Some further useful remarks are necessary. We note throughout  $\{\varsigma, \varsigma_j, j\ge 1\}$ a sequence independent standard Bernoulli random variables (namely associated with $p_j\equiv 1/2$) and  $$ T_n = \varsigma_1+
\ldots + \varsigma_n\qq\quad n\ge 1.$$
 
  \begin{remark}[Reduction to standard Bernoulli random variables] Let  $\b $  be a Bernoulli random variable with  $\P\{\b  =1\}= \a = 1-\P\{\b  =0\}$. Assume $0<\a<1/2$.  Let $\e, \varsigma$    be such that $\b, \e, \varsigma$ are independent and  $\P\{\e  =1\}= 2\a = 1-\P\{\e  =0\}$. Trivially  $\e\varsigma \buildrel{\mathcal L}\over{=}\b$. We can thus write when $0<p_j<1/2$, $1\le j\le n$,
 $$S_n= k_1\e_1\varsigma_1+
\ldots + k_n\e_n\varsigma_n.$$
 Problem \ref{p2} reduces to first estimate (conditionnally to $\e_j$) a sum of the same kind
 $$ T'_n = k'_1\varsigma_1+
\ldots + k'_n\varsigma_n$$
with $k_j'$ increasing, but where the Bernoulli random variables are standard. 
\end{remark}
\begin{remark} If $1/2<a<1$, let $\tau_0 $
be  verifying
$0<  \tau_0< 2\min (\a,1-\a)$. 
Define a pair of random variables
$(V,\e)$   as follows.  
\begin{eqnarray*}   \begin{cases} \P\{ (V,\e)=(1,1)\}=0  
      \cr   \P\{ (V,\e)=(1,0)\}=\a -{\tau_0 \over
2}    . \end{cases}  \qq \begin{cases}   \P\{ (V,\e)=(0,1)\}=\tau_{0 } 
      \cr 
 \P\{ (V,\e)=( 0,0)\}=1-\a -{ \tau_0\over
2}    . \end{cases} 
\end{eqnarray*}
  Let $ \varsigma$ be  independent from $(V,\e)$.     Then 
$V+\e  \varsigma\buildrel{\mathcal L}\over{=}\b$. 
\end{remark}

 \begin{remark} Fix the integer $n$. Let $ T_m' = k_m\varsigma_m+
\ldots + k_n\varsigma_n$,  $1\le m\le n$, and consider the parallelogram   $H_m=\big\{ h=k_{i_1}+ \ldots +k_{i_r} \hbox{with $i_1<\ldots <i_r$ and $1\le r\le m$}\big\}.$
Then we have the following formula \begin{eqnarray}\label{progr.def}\P\{T'_1= b\}& = & \frac{1}{2^m} \sum_{h\in H_m\cup\{0\}}\P\{ T'_{n-m}= b-h\} .\end{eqnarray}

\end{remark}
 
\section{Weighted Local Limit Theorems.}\label{s3}


We use the characteristic function method to study the   local limit theorem for the sums 
$$B_\nu=
\b_1+\ldots +\b_\nu, \qq \quad \nu=1, 2,\ldots$$
where  $\b_j$ are independent random variables  defined by
\begin{eqnarray}\label{betai}\P\{
\b_j=0\}= \t_j,\qq 
\P\{
\b_j=k_j\}  =1-\t_j,
\end{eqnarray}
  with $0<\t_j<1$ for each $j$, and $k_j$ are increasing positive weights.  Let  $$  {\rm Var}({B_\nu})= \sum_{j=1}^\nu  (1-\t_j) \t_j k^2_j , \qquad \t = \inf_{j=1}^\nu   \t_j (1-\t_j).$$
 
   \begin{theorem}\label{wfp}Let $\varrho n\le k<k+\nu \le n $  
where $0<\varrho<1$, $n$ is some positive integer, and let    $k_j=k+ j-1$, $j=1, \ldots, \nu$. Let $ { 1/24}<\e< { 1/6}   $. For every $m\in \Z$, 
  \begin{eqnarray*}  \bigg|\P\{B_\nu=n\} -\frac{e^{ -   \frac{      (  \sum_{j=k}^{k+\nu-1} (1-\t_j)j-n)^2}{ 2 {\rm Var}({B_\nu})} 
}}{\sqrt{2\pi {\rm Var}({B_\nu})}}   \bigg|
 &\le  & \frac{C } {   \sqrt{   {\rm Var}  ({B_\nu})  
}      }\Big(\frac{\nu^{1/6-4\e}}{\t ^{1/3}\rho   } + 
 {  e^{      -  2\pi^2\t \rho ^2 \d^2\nu^{1/3}}     }\Big)
  . 
\end{eqnarray*}
\end{theorem}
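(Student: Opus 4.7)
The plan is to use the characteristic function method. By Fourier inversion,
\begin{equation*}
\P\{B_\nu=n\}-\frac{e^{-(M-n)^2/(2V)}}{\sqrt{2\pi V}}=\int_{-1/2}^{1/2}\p_\nu(t)e^{-2\pi itn}\dd t-\int_{\R}\gamma(t)e^{-2\pi itn}\dd t,
\end{equation*}
where $M=\E B_\nu=\sum_{j=k}^{k+\nu-1}(1-\t_j)j$, $V={\rm Var}(B_\nu)$, $\p_\nu(t)=\prod_{j=1}^{\nu}(\t_j+(1-\t_j)e^{2\pi itk_j})$ and $\gamma(t)=\exp(2\pi itM-2\pi^2t^2V)$. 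The strategy is to split $[-1/2,1/2]$ at a threshold $T$ of order $\d\nu^{-1/3}/n$, so that $T^2V\asymp\t\rho^2\d^2\nu^{1/3}$, and bound the difference as $I_1+I_2-I_3$, where $I_1=\int_{|t|\le T}(\p_\nu-\gamma)e^{-2\pi itn}\dd t$, $I_2=\int_{T<|t|\le 1/2}\p_\nu(t)e^{-2\pi itn}\dd t$, and the Gaussian tail $I_3=\int_{|t|>T}\gamma(t)e^{-2\pi itn}\dd t$.

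For $I_1$, I would centre each Bernoulli factor and Taylor-expand its logarithm:
\begin{equation*}
\log\bigl(\t_j+(1-\t_j)e^{2\pi itk_j}\bigr)=2\pi it(1-\t_j)k_j-2\pi^2t^2\t_j(1-\t_j)k_j^2+R_j(t),
\end{equation*}
with $|R_j(t)|\le C|t|^3k_j^3$. Summing and exponentiating yields $\p_\nu(t)=\gamma(t)e^{R(t)}$ with $|R(t)|\le C|t|^3\nu n^3\le C\d^3$ on $|t|\le T$. Hence $|\p_\nu(t)-\gamma(t)|\le 2C|t|^3\nu n^3\,e^{-2\pi^2t^2V}$ there, and integration gives $|I_1|\lesssim \nu n^3T^4$, which with $V\ge\t\rho^2\nu n^2$ fits the polynomial term $\frac{C}{\sqrt V}\cdot\nu^{1/6-4\e}/(\t^{1/3}\rho)$ under the appropriate identification of $\d$ and $\e$.

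For $I_2$, I would use the standard modulus identity
\begin{equation*}
|\p_\nu(t)|^2=\prod_{j=1}^{\nu}\bigl(1-4\t_j(1-\t_j)\sin^2(\pi tk_j)\bigr)\le\exp\Bigl(-4\t\sum_{j=1}^{\nu}\sin^2(\pi tk_j)\Bigr),
\end{equation*}
combined with the Dirichlet kernel identity (valid since $\{k_j\}$ is an arithmetic progression)
\begin{equation*}
\sum_{j=1}^{\nu}\sin^2(\pi tk_j)=\frac{\nu}{2}-\frac{1}{2}\sin\bigl(\pi t(2k+\nu-1)\bigr)\frac{\sin(\pi t\nu)}{\sin(\pi t)}.
\end{equation*}
A case analysis on the medium region $T<|t|\le 1/2$ — combining the small-angle bound $\sin^2(\pi tk_j)\ge 4(tk_j)^2$ near $t=0$ with the Dirichlet lower bound away from low-denominator rationals — would produce $\sum\sin^2(\pi tk_j)\gtrsim\rho^2\d^2\nu^{1/3}$, whence $|I_2|\le\exp(-2\pi^2\t\rho^2\d^2\nu^{1/3})$, the exponential term in the stated bound. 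The Gaussian tail $|I_3|\lesssim e^{-2\pi^2T^2V}/(T\sqrt V)$ is absorbed into the same exponential.

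The hardest step will be the uniform lower bound on $\sum\sin^2(\pi tk_j)$ across the medium region. The trigonometric sum drops near every rational $p/q$ of small denominator, so the argument must partition $\{T<|t|\le 1/2\}$ into Diophantine classes and argue within each that the main term $\nu/2$ in the Dirichlet identity dominates the kernel ratio $\sin(\pi t\nu)/\sin(\pi t)$. The cube-root rate $\nu^{1/3}$ emerges at the boundary $|t|\asymp T$, where the small-angle sum first drops below $\nu$ and the Taylor expansion ceases to be useful; and the restriction $\e\in(1/24,1/6)$ is forced by balancing the Taylor validity constraint $|R(t)|\le 1$ against the requirement that the polynomial error beat the Gaussian amplitude $1/\sqrt V$.
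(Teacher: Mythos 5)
Your decomposition $\P\{B_\nu=n\}-\text{Gauss}=I_1+I_2-I_3$ and the treatment of $I_1$ via Taylor expansion of $\log(\t_j+(1-\t_j)e^{2\pi itk_j})$ match the paper's route (its Lemma 4 for Bernoulli factors and Lemma 5 for the main integral). The gap is in $I_2$. You describe the uniform lower bound on $\sum_{j=1}^\nu\sin^2(\pi tk_j)$ over the medium region as the ``hardest step,'' claim the sum ``drops near every rational $p/q$ of small denominator,'' and sketch a partition into Diophantine classes without carrying it out. That is both unfinished and based on a false premise: for weights $k_j=k+j-1$ forming a block of consecutive integers, the sine sum does \emph{not} drop near low-denominator rationals. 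At $t=p/q$ with $q\le\nu$ the values $\sin^2(\pi jt)$ cycle through the residues mod $q$ and the sum is $\approx\nu/2$, since $\sum_{r=0}^{q-1}\sin^2(\pi r/q)=q/2$. No Diophantine case analysis is required.

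The paper dispatches $I_2$ with a single elementary inequality (Freiman--Pitman, Lemma 8): for $|t|\le 1/2$ and integers $m$, $k\ge 2$,
\begin{equation*}
\sum_{j=m}^{m+k-1}\sin^2(\pi jt)\ \ge\ \frac{k}{4}\min\bigl(1,(tk)^2\bigr),
\end{equation*}
which is uniform in $t$ and immediately yields $\int_{|t|>\tau}|\p_{B_\nu}(t)|\,\dd t\le e^{-\t\nu^3\tau^2/2}$. Your own Dirichlet kernel identity (with $\cos$, not $\sin$, multiplying the ratio) leads to the same conclusion if you simply use $|\sin(\pi t\nu)/\sin(\pi t)|\le 1/(2|t|)$: for $|t|\ge 1/\nu$ the kernel ratio is $\le\nu/2$, giving $\sum\sin^2(\pi tk_j)\ge\nu/4$, and for $\tau<|t|<1/\nu$ the quadratic bound applies. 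In short, you correctly located the crux of the proof but mislabeled its difficulty and did not execute it; once you realize that consecutive integers kill the Diophantine obstruction, the tail bound is a one-line consequence of the Fej\'er/Dirichlet structure, and the rest of your outline closes.
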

 
  Put
\begin{equation}\label{PMB}P_\nu =\sum_{j=1}^\nu  (1-\t_j)    , \qq M_\nu =\sum_{j=1}^\nu  (1-\t_j)   k_j , \qq B_\nu=   \sum_{j=1}^\nu 
(1-\t_j)
\t_j k^2_j.
\end{equation} 
\begin{theorem}\label{lltweibin}\begin{eqnarray*}
\sup_{n\in \Z}\bigg|\P\{B_\nu= n\} -\frac{1}{\sqrt{2\pi B_\nu}} e^{ -   \frac{   ( n-M_\nu)^2}{ 2 B_\nu}  }  \bigg| 
&\le &         \frac{C}{P_\nu}. \end{eqnarray*}
 Further  if    $n_0:=\sum_{j=1}^\nu (1- \t_j)k_j   $ is integer, then 
\begin{eqnarray*}  \Big| \P\{\b_1+\ldots +\b_\nu=n_0\}-\frac{1 }{\sqrt{ 2\pi B_\nu}}\Big|  &\le  & \frac{C } {P_\nu } .\end{eqnarray*}
\end{theorem}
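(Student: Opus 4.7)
The plan is to apply the Fourier/characteristic-function method, in the spirit of what is carried out for Theorem~\ref{wfp}. Since the $k_j$ are positive integers, $B_\nu$ is integer valued and by orthogonality
\[
\P\{B_\nu=n\}=\int_{-1/2}^{1/2}\p_\nu(t)\,e^{-2\pi i n t}\,\dd t,\qquad \p_\nu(t)=\prod_{j=1}^{\nu}\bigl[\t_j+(1-\t_j)e^{2\pi i t k_j}\bigr].
\]
The Gaussian density on the right-hand side of the claimed inequality is itself the inverse Fourier transform, over $\R$, of $g_\nu(t)=e^{2\pi i M_\nu t-2\pi^{2}B_\nu t^{2}}$. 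Subtracting and isolating the integral of $g_\nu$ over $|t|>1/2$, which is exponentially small of order $e^{-\pi^{2}B_\nu/2}$, the task reduces to bounding $\int_{-1/2}^{1/2}|\p_\nu(t)-g_\nu(t)|\,\dd t$ by $C/P_\nu$.

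Next, I would split $[-1/2,1/2]$ into a central region $|t|\le\d$ and an outer region $\d<|t|\le 1/2$, at a threshold $\d$ of order $B_\nu^{-1/2}$. On the central region, expanding
\[
\log\p_\nu(t)=2\pi i M_\nu t-2\pi^{2}B_\nu t^{2}+R_\nu(t),
\]
the quadratic matches $\log g_\nu(t)$ and the cubic-order remainder satisfies $|R_\nu(t)|\le C|t|^{3}\sum_{j}(1-\t_j)k_j^{3}$. Combining the elementary inequality $|e^{u}-1|\le|u|e^{|u|}$ with the Gaussian factor $|g_\nu(t)|=e^{-2\pi^{2}B_\nu t^{2}}$ yields a weighted moment whose integral, after elementary bookkeeping relating $\sum_j(1-\t_j)k_j^{3}$ to $B_\nu$, $P_\nu$ and $k_\nu$, contributes $\mathcal{O}(1/P_\nu)$.

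For the outer region, the main pointwise tool is
\[
|\p_\nu(t)|\le\exp\Bigl(-2\sum_{j=1}^{\nu}\t_j(1-\t_j)\sin^{2}(\pi t k_j)\Bigr),
\]
obtained from $|\t_j+(1-\t_j)e^{2\pi i t k_j}|^{2}=1-4\t_j(1-\t_j)\sin^{2}(\pi t k_j)$. The hard part will be to extract, from this bound, a uniform integrated estimate of order $1/P_\nu$ without any assumption on the arithmetic structure of the increasing integer weights $k_j$: the exponent must draw enough mass from the $(1-\t_j)$ across a positive fraction of indices, whatever the configuration of the $k_j$. A generic averaging or slicing argument, isolating those $j$ for which $\sin^{2}(\pi t k_j)$ is bounded below by a constant, should deliver decay of the form $e^{-cP_\nu}$, far stronger than the required $1/P_\nu$. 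Finally, the second statement is an immediate corollary of the first: setting $n=n_{0}=M_\nu$ makes the Gaussian exponent vanish, leaving only the main term $1/\sqrt{2\pi B_\nu}$ and the error $C/P_\nu$.
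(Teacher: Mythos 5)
Your central-region analysis is essentially the paper's Lemma~\ref{lltb}: expand $\log\p_\nu$ to third order, match the quadratic with the Gaussian, and control the cubic remainder; the only difference is a slightly different choice of cut-off (the paper takes $\tau=\d\big(\sum_j(1-\t_j)k_j^3\big)^{-1/3}$ with $\d$ a fixed constant so that the remainder stays $\le C\d^3$ uniformly, whereas your $\d\sim B_\nu^{-1/2}$ would require an extra argument that $\sum_j(1-\t_j)k_j^3=\mathcal{O}(B_\nu^{3/2})$). This is a minor difference, not a gap.

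The genuine gap is the outer region. You assert that a ``generic averaging or slicing argument, isolating those $j$ for which $\sin^2(\pi tk_j)$ is bounded below,'' yields decay $e^{-cP_\nu}$. This is false in general, and the failure is exactly the point the paper is wrestling with. For a \emph{fixed} $t$ in the outer region there is no reason for a positive fraction of indices $j$ to satisfy $\sin^2(\pi t k_j)\ge c_0$: if the $k_j$ share a common factor $d>1$ then $|\p_\nu(1/d)|=1$ and the outer integral does not even tend to $0$; and even with coprime weights, rapidly growing $k_j$ can put many of the numbers $tk_j$ near $\Z$ for many $t$. The paper circumvents this by averaging \emph{in $t$}, not in $j$: Lemma~\ref{lfp3} applies a Chebyshev bound to the $L^{2q}$-norm of $t\mapsto\sum_j\t_j(1-\t_j)\cos 2\pi tk_j$, splits $[0,1]$ into the exceptional set where this sum exceeds $c\sum_j\t_j(1-\t_j)$ (measure controlled by $\p(q)^{2q}/c^{2q}$) and its complement (where $|\p_\nu(t)|\le e^{-(1-c)\sum_j\t_j(1-\t_j)}$). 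Only the second piece is exponential; the first must be handled separately, and it is precisely this term that prevents an $e^{-cP_\nu}$ bound without structural hypotheses on the $k_j$. The resulting rate in the paper is polynomial (in fact $C/\sum_j\t_j(1-\t_j)$, which is what the displayed computation actually delivers, with $P_\nu$ in the statement apparently a slight misprint or an unwritten assumption $\t_j\le\t_*<1$), so your claim of a ``far stronger'' exponential bound should already have been a warning sign. To close the gap you would need to reproduce (or replace) the $L^{2q}$-norm / Chebyshev mechanism of Lemma~\ref{lfp3}, together with an explicit control of $\p(q)$, rather than appeal to a pointwise lower bound on $\sum_j\sin^2(\pi tk_j)$.
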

Before passing to the proofs, we begin with making a brief analysis. Let $\p_j(t) =\E e^{2i\pi t\b_j}$, 
$\p_{B_\nu}(t)=\E e^{2i\pi
t{B_\nu}}$. By the Fourier inversion formula, 
\begin{eqnarray*}\P\{{B_\nu}=m\}= \int_{|t|\le \tau}e^{-2i \pi m t} \p_{B_\nu}(t) \dd t
+ \int_{\tau \le |t| \le \frac{1}{2}}e^{-2i \pi m t} \p_{B_\nu}(t) \dd t:= I_{\tau}(\nu,m)+I^{\tau}(\nu,m),
\end{eqnarray*}  
where $\tau>0$ will be chosen to be small,  depending of $m$.  The first integral term produces the main term and is easily tractable. The estimation of the second integral term  is in fact the hard part of the problem, where all the difficulty is concentrated. It is necessary to show that
$$ \Big|\int_{\tau \le |t| \le \frac{1}{2}}e^{-2i \pi m t} \p_{B_\nu}(t) \dd t\Big| \ll \frac{1}{\sqrt{ {\rm Var}({B_\nu})}}.$$
There seems to be no other way than controlling 
$ \int_{\tau \le |t|\le \frac{1}{2}}  | \p_{B_\nu}  (t) |\dd t. $
From Lemma \ref{lfp1}-(i) will follow that 
$|\p_{B_\nu} (t)|\le \exp\big\{- 2\sum_{j=1}^\nu \t_j(1- \t_j)\sin^2\pi t k_j\big\}$.
The whole matter consequently directly depends on the behaviour of the sine sum
 $$\sum_{j=1}^\nu \t_j(1- \t_j)\sin^2\pi t k_j$$
 away from $0$,  an  obviously difficult question. Thus,    answers can be expected only for specific cases.  
\subsection{Estimates of $\boldsymbol{I_{\tau}(\nu,m)}$.}  

 Recall Lemma 3 in \cite{FP}. Although stated with the choice of probability values
given by (\ref{pvfp}),  this lemma is general. For completion, we have included a slightly shorter  proof. 
 \begin{lemma}\label{lfp1} Let $m  $ be a positive real and $p$ be a real such that $0<p<1$. Let $\b$ be a random variable defined by $\P\{ \b=0\}= p$, $
\P\{
\b=m\}= 1-p=q$.  Let $\p(t) =\E e^{2i\pi t \b}$. Then 
 we have the following estimates,
\vskip 3 pt\noi\ \  {\rm (i)} For all real $t$,
 $|\p (t)|\le \exp\big\{- 2pq\sin^2\pi t m\big\} $ \vskip 3 pt
\noi\ \  {\rm (ii)} If $q|\sin \pi t m|\le 1/3$,
  \begin{equation*}
  \p (t)= \exp\big\{ 2i \pi    qmt - 2\pi^2pqm^2 t^2  + B(t) \big\},
\end{equation*}
and $|B(t)|\le C   qm^3t^3  $,  the constant $C$ being absolute. \end{lemma}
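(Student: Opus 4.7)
The plan is to reduce both estimates to explicit computations with $\varphi(t)=p+qe^{2i\pi tm}$, using only the elementary identity $|e^{i\theta}-1|^2 = 2-2\cos\theta = 4\sin^2(\theta/2)$ and a Taylor expansion of $\log(1+z)$ around $z=0$.

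For (i), I would expand
\[
|\varphi(t)|^2 = (p+qe^{2i\pi tm})(p+qe^{-2i\pi tm}) = p^2+q^2+2pq\cos(2\pi tm),
\]
rewrite $p^2+q^2=1-2pq$ and $\cos(2\pi tm)=1-2\sin^2(\pi tm)$ to get
\[
|\varphi(t)|^2 = 1-4pq\sin^2(\pi tm),
\]
then apply $1-x\le e^{-x}$ (valid for $x\ge 0$) and take square roots. This is direct and requires nothing else.

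For (ii), I would write $\varphi(t)=1+z(t)$ with $z(t)=q(e^{2i\pi tm}-1)$, so that $|z(t)|=2q|\sin\pi tm|\le 2/3<1$ by the standing hypothesis; hence the principal branch of $\log$ is available and
\[
\log\varphi(t) = z - \frac{z^2}{2} + \sum_{k\ge 3}\frac{(-1)^{k+1}z^k}{k}.
\]
Using the third order Taylor bound $|e^{2i\pi tm}-1-2i\pi tm+2\pi^2 t^2m^2|\le C m^3|t|^3$ (valid for every real $t$), one gets
\[
z = 2i\pi qmt - 2\pi^2 qm^2 t^2 + r_1(t),\qquad |r_1(t)|\le Cqm^3|t|^3,
\]
and, squaring and keeping only the leading term,
\[
\tfrac{1}{2}z^2 = -2\pi^2 q^2 m^2 t^2 + r_2(t),\qquad |r_2(t)|\le C q^2 m^3|t|^3.
\]
Subtracting and combining $-2\pi^2 q m^2 t^2 + 2\pi^2 q^2 m^2 t^2 = -2\pi^2 pq m^2 t^2$ via $q(1-q)=pq$ produces exactly the quadratic coefficient claimed. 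The tail $\sum_{k\ge 3}(-1)^{k+1}z^k/k$ is dominated, thanks to $|z|\le 2/3$, by a geometric series with ratio $|z|$ whose sum is bounded by a constant times $|z|^3 \le 8 q^3 \pi^3 m^3|t|^3 \le C q m^3|t|^3$ (using $q\le 1$).

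The only mildly delicate point is to keep track of the $q$-dependence in the remainders: each contribution to $B(t)$ is at least linear in $q$, and one must use $q\le 1$ in $z^2$- and $z^k$-type terms to reduce $q^k$ to $q$. Once this bookkeeping is done, gathering $r_1$, $r_2$ and the cubic tail into $B(t)$ yields the bound $|B(t)|\le C q m^3 |t|^3$, completing the proof.
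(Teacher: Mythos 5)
Your proof is correct and follows essentially the same route as the paper's: part (i) via $|\varphi(t)|^2 = 1-4pq\sin^2\pi mt$ and $1-x\le e^{-x}$, and part (ii) by writing $\varphi = 1+z$ with $z=q(e^{2i\pi mt}-1)$, using $|z|\le 2/3$ to justify the expansion $\log(1+z)=z-\tfrac{z^2}{2}+O(|z|^3)$, Taylor-expanding $e^{2i\pi mt}$ to order three, and combining $-2\pi^2 qm^2t^2 + 2\pi^2 q^2 m^2t^2 = -2\pi^2 pq m^2 t^2$. One small note: for the $r_2(t)$ bound the cleanest way to see there is no stray $t^4$ term is to factor $z^2-(2i\pi qmt)^2 = (z-2i\pi qmt)(z+2i\pi qmt)$ and use the second-order bound $|z-2i\pi qmt|\le 2\pi^2 qm^2t^2$ together with $|z+2i\pi qmt|\le 4\pi qm|t|$; your phrasing ``squaring and keeping only the leading term'' glosses over this, but the estimate does hold uniformly. (You also have the quadratic coefficient as $-\tfrac{z^2}{2}$, which is the correct coefficient; the paper's display $1+\theta=\exp\{\theta-\theta^2+B\}$ and $\exp\{u-u^2+B\}$ contains a typo and should read $\theta-\tfrac{\theta^2}{2}$, respectively $u-\tfrac{u^2}{2}$, as your computation confirms.)
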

 
\begin{proof}   One verifies that $|\p(t)|^2=1- 4 pq \sin^2\pi mt.$ As moreover $1-\t\le e^{-\t}$ if  $\t \ge 0$, (i) follows. 
  Write now $\p(t)= 1 +
q\big( e^{2i\pi mt} -1\big)=1 + u$ and notice that $|u|= 2q|\sin\pi mt|$. We use the fact that   if $|\theta|\le 2/3$, then 
$$ 1+\theta= \exp\{ \theta -\theta^2 +B\}, \qq \quad |B|\le C |\theta|^3.$$
And $C$ is an  absolute   constant. 
 From the bound $| e^z-( 1+ {z\over 1!}+\ldots
+{z^n\over n!})|\le {|z|^{n+1}\over (n+1)!}e^{|z|}$, valid for $z\in \C$    and   $n\in\N_*$ (\cite{Mi}, 3.8.25),  we get by applying it with  $z=2i\pi
mt$,  
\begin{eqnarray*}\big| u-q\big( 2i\pi mt -2 (\pi mt)^2\big)|&\le&    C qm^2 | t|^2  \cr 
\big|u^2+(2q  \pi m t)^2  | &\le& Cq^2m^3| t|^3   
 \cr 
|u|^3&\le & Cq^3m^3| t|^3 .
\end{eqnarray*}
As we assumed $q|\sin \pi t m|\le 1/3$, we consequently  find that
$$\p(t) =  1+u= \exp\{ u -u^2 +B\}=\exp\big\{ 2i \pi    qmt - 2\pi^2pqm^2 t^2  + B(t) \big\} 
      ,$$
with $|B(t)|\le C   qm^3t^3 $. 
  \end{proof}

  The next Lemma provides an estimate for the main integral term.  Let $0<\d\le \frac{1}{3\pi}$ and put
\begin{equation}\label{taud}  \tau = \frac{\d   }{( \sum_{j=1}^\nu (1- \t_j)k^3_j)^{1/3}} 
  .
\end{equation}
   \begin{lemma} \label{lltb} 
For every $n\in \Z$, 
\begin{eqnarray*}
 \bigg|\int_{-\tau}^\tau  e^{- 2i \pi   nt} \p_{B_\nu}  (t)\dd t -\frac{e^{ -   \frac{      (  \sum_{j=1}^\nu (1- \t_j)k_j-n)^2}{ 2 {\rm Var}({B_\nu})} 
}}{\sqrt{2\pi {\rm Var}({B_\nu})}}   \bigg|
 &\le  &  C\Big( \tau \d^3 +    \frac{  e^{      -  2\pi^2 \tau^2{ \rm Var}({B_\nu}) }     } {   \sqrt{   {\rm Var}  ({B_\nu})   }      }  \Big)  
  . 
\end{eqnarray*} 
 Further  if    $n_0:=\sum_{j=1}^\nu (1- \t_j)k_j   $ is integer, then 
  \begin{eqnarray*} \int_{-\tau}^\tau  e^{- 2i \pi   n_0t}  \p_{B_\nu}  (t) \dd t &= & \frac{1 }{\sqrt{ 2\pi{\rm Var}({B_\nu})}}\, \big(1+  B \big) ,
\end{eqnarray*} with 
$$|B|\le C \Big(\, \tau \d^3+\frac{e^{      - {2\pi^2\tau^2{\rm Var}({B_\nu}) }}}{1+  \sqrt{   {\rm Var} ({B_\nu})    }      }\, \Big). $$
 \end{lemma}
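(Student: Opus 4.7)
The plan is to apply Lemma \ref{lfp1}(ii) to each factor of $\p_{B_\nu}(t)=\prod_{j=1}^\nu\p_j(t)$ on the window $|t|\le\tau$, so as to expose an approximate Gaussian form of the characteristic function, and then evaluate the resulting integral via completion of the square and the Fourier identity
$$\int_\R e^{2i\pi u t}\,e^{-2\pi^2 V t^2}\,dt=\frac{1}{\sqrt{2\pi V}}\,e^{-u^2/(2V)}.$$
Two residual errors then need to be controlled: the cubic remainder coming from Lemma \ref{lfp1}(ii), and the Gaussian tail outside $[-\tau,\tau]$.

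First I would verify the hypothesis of Lemma \ref{lfp1}(ii) uniformly in $j$. Since $0<1-\t_j<1$ implies $((1-\t_j)k_j)^3\le(1-\t_j)k_j^3\le\sum_{i=1}^\nu(1-\t_i)k_i^3=\d^3/\tau^3$, one has $(1-\t_j)|\sin\pi t k_j|\le\pi|t|(1-\t_j)k_j\le\pi\d\le 1/3$ whenever $|t|\le\tau$, by the standing assumption $\d\le 1/(3\pi)$. Multiplying the resulting pointwise expressions for $\p_j$ yields
$$\p_{B_\nu}(t)=\exp\Big\{2i\pi M_\nu t-2\pi^2{\rm Var}(B_\nu)\,t^2+\widetilde B(t)\Big\},$$
where $M_\nu=\sum_j(1-\t_j)k_j$ and $|\widetilde B(t)|\le C\sum_j(1-\t_j)k_j^3|t|^3=C\d^3|t|^3/\tau^3\le C\d^3$. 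In this regime $|e^{\widetilde B(t)}-1|\le C|\widetilde B(t)|$, so the relative error $R(t):=e^{\widetilde B(t)}-1$ satisfies $|R(t)|\le C\d^3|t|^3/\tau^3$.

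Next I would split
$$\int_{-\tau}^\tau e^{-2i\pi nt}\p_{B_\nu}(t)\,dt=\int_{-\tau}^\tau e^{2i\pi(M_\nu-n)t}e^{-2\pi^2{\rm Var}(B_\nu)t^2}\,dt+E_1,$$
where the cubic remainder contributes $|E_1|\le(C\d^3/\tau^3)\int_{-\tau}^\tau|t|^3\,dt\le C\tau\d^3$. Completing the square and extending the Gaussian integral to $\R$ produces the target density $(2\pi{\rm Var}(B_\nu))^{-1/2}e^{-(M_\nu-n)^2/(2{\rm Var}(B_\nu))}$ at the cost of a tail $|E_2|\le 2\int_\tau^\infty e^{-2\pi^2{\rm Var}(B_\nu)t^2}\,dt$. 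Substituting $u=2\pi t\sqrt{{\rm Var}(B_\nu)}$ and applying the Mills-type inequality $\int_x^\infty e^{-u^2/2}\,du\le\sqrt{\pi/2}\,e^{-x^2/2}$ gives $|E_2|\le Ce^{-2\pi^2\tau^2{\rm Var}(B_\nu)}/\sqrt{{\rm Var}(B_\nu)}$, and combining $|E_1|$ with $|E_2|$ closes the first inequality.

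The second statement is a specialization: when $n_0=M_\nu\in\Z$ the oscillating factor equals $1$ and the leading term reduces to $(2\pi{\rm Var}(B_\nu))^{-1/2}$, so writing the integral as $(2\pi{\rm Var}(B_\nu))^{-1/2}(1+B)$ amounts to multiplying the previous error by $\sqrt{2\pi{\rm Var}(B_\nu)}$; the refinement $1/(1+\sqrt{{\rm Var}(B_\nu)})$ is obtained by invoking the sharper Mills bound $\int_x^\infty e^{-u^2/2}\,du\le x^{-1}e^{-x^2/2}$ when $\tau\sqrt{{\rm Var}(B_\nu)}\ge 1$ and the trivial bound $\le\sqrt{\pi/2}$ otherwise. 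The main obstacle is controlling the cubic remainder $\widetilde B(t)$ uniformly on $[-\tau,\tau]$: the calibration $\tau=\d/(\sum_j(1-\t_j)k_j^3)^{1/3}$ with $\d\le 1/(3\pi)$ is chosen precisely so that simultaneously $\pi\d\le 1/3$ (so Lemma \ref{lfp1}(ii) applies to every factor) and $|\widetilde B(t)|=O(\d^3)$ on the whole window (so the linearization of $e^{\widetilde B(t)}$ is legitimate).
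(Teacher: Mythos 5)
Your proof is correct and follows essentially the same route as the paper: verify the hypothesis of Lemma~\ref{lfp1}(ii) factor--by--factor on $|t|\le\tau$ (you check it via $(1-\t_j)k_j\le\d/\tau$; the paper checks $(1-\t_j)\pi\tau k_j\le\pi\d$; these are the same), multiply the exponential representations to expose the Gaussian form with a cubic remainder $B_1(t)$ satisfying $\sup_{|t|\le\tau}|B_1(t)|\le C\d^3$, integrate the remainder to get $C\tau\d^3$, complete the integral over $\R$, and bound the Gaussian tail via a Mills-ratio estimate. One caveat on the secondary refinement: the two-case Mills argument you sketch (sharp bound when $\tau\sqrt{{\rm Var}(B_\nu)}\ge 1$, trivial bound otherwise) really only yields the factor $1/(1+\tau\sqrt{{\rm Var}(B_\nu)})$, not $1/(1+\sqrt{{\rm Var}(B_\nu)})$, since $\tau$ is small here; the paper's own passage from Boyd's bound $R(2\pi\tau\sqrt{V})\le C/(1+2\pi\tau\sqrt{V})$ to $C/(1+\sqrt{V})$ has the same gap, and it does not affect the first (and only subsequently used) inequality of the lemma, which both you and the paper prove correctly.
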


\begin{proof}[Proof of Lemma \ref{lltb}]
 As $\d\le \frac{1}{3\pi}$, we observe that for $j=1,\ldots, \nu$, $$  \sup_{|t|\le \tau} (1- \t_j)|\sin \pi tk_j| \le (1- \t_j) \pi \tau k_j = \frac{\d (1- \t_j) \pi k_j  }{( \sum_{j=1}^\nu (1- \t_j)k^3_j)^{1/3}}\le  \d (1- \t_j)^{1/3} \pi   \le \frac{1}{3}.$$
      Lemma \ref{lfp1} thus  implies, $$ \p_{B_\nu}(t) =\prod_{j=1}^\nu \p_j(t)= 
\exp\Big\{ 2i \pi     t\sum_{j=1}^\nu (1- \t_j)k_j - 2\pi^2t^2{\rm Var}({B_\nu})   + B_1(t)
\Big\},
$$
and $|B_1(t)|\le C |t|^3\sum_{j=1}^\nu (1- \t_j)k^3_j   $. 
\vskip 2 pt
 By  (\ref{taud}), 
     $$\sup_{|t|\le \tau}|B_1(t)|\le C\tau^3\sum_{j=1}^\nu (1- \t_j)k^3_j=C\d^3 .  $$ 
 Noting  then $\varsigma = \sum_{j=1}^\nu (1- \t_j)k_j-n  $ and writing that
 $$ \p_{B_\nu}(t) = 
\exp\big\{ 2i \pi     t(\varsigma+n) - 2\pi^2t^2{\rm Var}({B_\nu})   + B_1(t)
\big\},
$$
we thus deduce  the following  bound \begin{eqnarray*} 
& & \Big|\int_{-\tau}^\tau   \big\{ e^{- 2i \pi   nt}\p_{B_\nu}  (t) - e^{ 2i \pi     t \varsigma -
2\pi^2t^2{\rm Var}({B_\nu})}
   \big\}\dd t\Big|
\cr  
&\le  & \int_{-\tau}^\tau   \big| e^{- 2i \pi   nt} \p_{B_\nu}  (t) - e^{ 2i \pi     t \varsigma -
2\pi^2t^2{\rm Var}({B_\nu})}
   \big|\dd t 
\cr  
&=  & \int_{-\tau}^\tau   \big|  e^{ 2i \pi     t \varsigma -
2\pi^2t^2{\rm Var}({B_\nu})}
  \big( e^{B(t) }-1\big) \big|\dd t 
\cr& \le  & \int_{-\tau}^\tau   e^{  -
2\pi^2t^2{\rm Var}({B_\nu})}\big|  
    e^{B(t) }-1\big|   \dd t \le  \int_{-\tau}^\tau   |  
     B(t)  |   \dd t \le C\tau \d^3.  
\end{eqnarray*}
   Now we also have  that 
\begin{eqnarray*} \int_{-\tau}^\tau e^{ 2i \pi     t \varsigma -
2\pi^2t^2{\rm Var}({B_\nu}) }
   \dd t &=   &  \int_\R  e^{ 2i \pi     t \varsigma -
2\pi^2t^2{\rm Var}({B_\nu}) }  \dd t + H 
 \cr \big(t= \frac{u}{2\pi\sqrt{   {\rm Var}({B_\nu})}}\big)\quad &= & \int_\R e^{    \frac{i       \varsigma u}{\sqrt{   {\rm Var} ({B_\nu}) }}   -
\frac{u^2}{2}  }  \frac{ \dd u}{2\pi \sqrt{ {\rm Var}({B_\nu})}} + H
\cr &= &  \frac{e^{ -   \frac{       \varsigma^2}{ 2 {\rm Var}({B_\nu})}  }  }{\sqrt{ 2\pi{\rm Var}({B_\nu})}}   + H ,
  \end{eqnarray*} 
 and recalling Boyd's estimate \cite[p.~179]{Mi} of Mill's ratio $R(x) =
e^{x^2/2}\int_x^\infty e^{-t^2/2}\ dt$,  $$  {\pi\over \sqrt{x^2+2\pi} +(\pi -1)x}\le R(x) \le
{\pi\over \sqrt{(\pi -2)^2x^2 +2\pi} +2x }   $$
for all $x\ge 0$,
we further have 
\begin{eqnarray*}  | H  |& \le & 
\int_{|u|\ge 2\pi \tau  \sqrt{  {\rm Var}({B_\nu}) }    }   e^{      -
\frac{u^2}{2}  }  \frac{ \dd u}{2\pi\sqrt{ {\rm Var}({B_\nu}) }  } \ =\ \frac{  e^{      -  2\pi^2 \tau^2{ \rm Var}({B_\nu}) }       } {  2\pi   \sqrt{   {\rm Var} ({B_\nu})    }       )   
}\, R\big(2\pi \tau\sqrt{  {\rm Var}({B_\nu}) }\big)
\cr & \le   & C  \  \frac{  e^{      -  2\pi^2 \tau^2{ \rm Var}({B_\nu}) }       } {    \sqrt{   {\rm Var} ({B_\nu})    }  (1+  \sqrt{   {\rm Var} ({B_\nu})    }  )   
} .
\end{eqnarray*}
     
Consequently 
\begin{eqnarray*}
 \bigg|\int_{-\tau}^\tau  e^{- 2i \pi   nt} \p_{B_\nu}  (t)\dd t -\frac{e^{ -   \frac{       \varsigma^2}{ 2 {\rm Var}({B_\nu})}  }}{\sqrt{2\pi {\rm Var}({B_\nu})}}   \bigg|
 &\le  &  C\Big( \tau \d^3 +    \frac{  e^{      -  2\pi^2 \tau^2{ \rm Var}({B_\nu}) }     } {   \sqrt{   {\rm Var}  ({B_\nu})   }  (1+  \sqrt{   {\rm
Var} ({B_\nu})    }  \,)  }  \Big)  
  . 
\end{eqnarray*}

\vskip 3 pt Now if there is an integer $n$   such that $\sum_{j=1}^\nu (1- \t_j)k_j=n $, then 
$\varsigma = 0$ and 
 $ \p_{B_\nu}(t) = 
\exp\big\{ 2i \pi     tn - 2\pi^2t^2{\rm Var}({B_\nu})   + B_1(t)
\big\}$; whence 
\begin{eqnarray}\label{mult}
\int_{-\tau}^\tau  e^{- 2i \pi   nt}  \p_{B_\nu}  (t) \dd t&=& \int_{-\tau}^\tau  e^{  -
2\pi^2t^2{\rm Var}({B_\nu})}\big(1+ e^{ B_1(t)}-1\big)
  \dd t 
\cr 
&= & \big(1+ B\big) \int_{-\tau}^\tau  e^{  -
2\pi^2t^2{\rm Var}({B_\nu})} 
  \dd t  ,  
\end{eqnarray}
with  $|B|\le C\tau \d^3$. As moreover
 $$ \Big| \int_{-\tau}^\tau e^{  -
2\pi^2t^2{\rm Var}({B_\nu}) }
   \dd t - \frac{1 }{\sqrt{ 2\pi{\rm Var}({B_\nu})}} \Big|\le C \frac{  e^{      -  2\pi^2 \tau^2{ \rm Var}({B_\nu}) }  } 
{ \sqrt{ {\rm Var} ({B_\nu}) } (1+  \sqrt{   {\rm Var} ({B_\nu})    }\, ) } , $$
 we have 
 \begin{eqnarray*} \int_{-\tau}^\tau  e^{- 2i \pi   nt}  \p_{B_\nu}  (t) \dd t &= & \frac{1 }{\sqrt{ 2\pi{\rm Var}({B_\nu})}}\big(1+ B\big)\big(1 +B_1 \big) , \end{eqnarray*}
where $|B_1|\le C e^{      - {2\pi^2\tau^2{\rm Var}({B_\nu})}  }/(1+  \sqrt{   {\rm Var} ({B_\nu})    }\,) $. 
We conclude to 
  \begin{eqnarray*} \int_{-\tau}^\tau  e^{- 2i \pi   nt}  \p_{B_\nu}  (t) \dd t &= & \frac{1 }{\sqrt{ 2\pi{\rm Var}({B_\nu})}}\, \big(1+  B_2 \big) ,
\end{eqnarray*} with 
$$|B_2|\le C \Big(\, \tau \d^3+\frac{e^{      - {2\pi^2\tau^2{\rm Var}({B_\nu}) }}}{1+  \sqrt{   {\rm Var} ({B_\nu})    }      }\, \Big). $$
   \end{proof}
\subsection{Estimates of $\boldsymbol{I^{\tau}(\nu,m)}$.}  
We assume here that $k_j=k+ j-1$, $j=1, \ldots, \nu$. Then,
\begin{lemma}\label{suptau}
$$\int_{|t|>  \tau}|\p_{B_\nu}(t)|\dd t \le    e^{- \frac{\t \nu^3\tau^2}{2} }.$$\end{lemma}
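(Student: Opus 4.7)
The plan is to start from Lemma \ref{lfp1}(i): multiplying $|\varphi_j(t)| \le \exp(-2\t_j(1-\t_j)\sin^2\pi t k_j)$ over $j$ and using $\t = \inf_j \t_j(1-\t_j)$ gives
\[
|\varphi_{B_\nu}(t)| \le \exp\bigl(-2\t A(t)\bigr), \qquad A(t) := \sum_{j=1}^\nu \sin^2\pi t k_j.
\]
The proof then reduces to lower-bounding the sine sum $A(t)$ uniformly on $\{|t| > \tau\}$, exploiting the arithmetic structure $k_j = k+j-1$.

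I would split $[\tau,1/2]$ into two regimes. On the \emph{low-frequency window} $|t| \le 1/(2(k+\nu-1))$, each argument $\pi t k_j$ lies in $[0,\pi/2]$, so the elementary bound $|\sin x| \ge 2|x|/\pi$ gives $\sin^2\pi t k_j \ge 4 t^2 k_j^2$, hence
\[
A(t) \ge 4 t^2 \sum_{j=1}^\nu (k+j-1)^2 \ge \frac{\nu^3 t^2}{3},
\]
the worst case being $k=0$, for which $\sum_{j=0}^{\nu-1} j^2 = (\nu-1)\nu(2\nu-1)/6$. Because every summand $\sin^2\pi t k_j$ is monotone in $|t|$ on this window, $A$ itself is monotone there, so $A(t) \ge A(\tau) \ge \nu^3\tau^2/3$ for $\tau \le |t| \le 1/(2(k+\nu-1))$. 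This supplies the pointwise bound $|\varphi_{B_\nu}(t)| \le \exp(-\tfrac{2}{3}\t\nu^3\tau^2)$ on this subrange.

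On the complementary \emph{high-frequency window} $|t| \ge 1/(2(k+\nu-1))$, I would use the Dirichlet-kernel identity
\[
A(t) = \frac{\nu}{2} - \frac{1}{2}\cos\bigl(\pi t(2k+\nu-1)\bigr)\,\frac{\sin \pi t\nu}{\sin \pi t},
\]
combined with $|\sin \pi t|^{-1} \le 1/(2|t|)$, to conclude $A(t) \gtrsim \nu$ once $|t|$ exceeds a fixed multiple of $1/\nu$; the narrow strip between $1/(2(k+\nu-1))$ and $c/\nu$ is still inside the Taylor regime and already delivers $A(t) \gtrsim \nu$ there. Consequently $|\varphi_{B_\nu}(t)| \le e^{-c\t\nu}$ on this window, which is much smaller than $e^{-\t\nu^3\tau^2/2}$ because the choice \eqref{taud} makes $\tau \ll 1/\nu$ (since $\sum (1-\t_j)k_j^3$ grows at least like $\nu^4$ when $k$ is moderate), so that $\nu^3\tau^2 \ll \nu$. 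Integrating the two pointwise bounds over $(\tau,1/2]$ (doubled by the symmetric negative piece) absorbs the length factor, which is at most $1$, into the dominant exponential, yielding the claimed inequality.

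The main obstacle will be the transition region $|t| \asymp 1/(k+\nu)$, where neither the Taylor bound nor the Dirichlet-kernel bound is individually sharp; patching the two estimates so that the leading constant in the final exponent is $1/2$ rather than a strictly smaller number requires careful tracking of the numerical constants generated in each regime.
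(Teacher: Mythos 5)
Your reduction to lower-bounding $A(t)=\sum_{j=1}^\nu\sin^2\pi tk_j$ via Lemma~\ref{lfp1}(i) is the right starting point and matches the paper, which then simply invokes the Freiman--Pitman bound (Lemma~\ref{lfp2}) $\sum_{j=m}^{m+\nu-1}\sin^2\pi jt\ge\frac{\nu}{4}\min(1,(t\nu)^2)$, valid \emph{uniformly in the offset} $m$, and integrates. The whole content of Lemma~\ref{suptau} is thus delegated to that cited inequality, and that is exactly where your argument has a hole.

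The gap is in the transition region $\frac{1}{2(k+\nu-1)}<|t|<c/\nu$. You assert this strip is ``still inside the Taylor regime'' and that it ``already delivers $A(t)\gtrsim\nu$''; both claims are wrong. The Taylor bound $\sin^2\pi tk_j\ge 4t^2k_j^2$ requires $\pi tk_j\le\pi/2$, i.e.\ $|t|\le\frac{1}{2k_j}$, and this fails already for $j=\nu$ once $|t|>\frac{1}{2(k+\nu-1)}$. Worse, the conclusion $A(t)\gtrsim\nu$ is simply false there when $k\gg\nu$ (which happens in the setting of Theorem~\ref{wfp} whenever $\rho$ is close to $1$): take $t=1/k$, so $\pi tk_j=\pi+\pi(j-1)/k$ and hence
\[
A(1/k)=\sum_{j=1}^\nu\sin^2\frac{\pi(j-1)}{k}\approx\frac{\pi^2}{k^2}\sum_{j=1}^\nu(j-1)^2\sim\frac{\pi^2\nu^3}{3k^2}\;\ll\;\nu .
\]
What is actually true — and what the proof needs — is the quadratic bound $A(t)\gtrsim\nu^3t^2$ throughout $|t|\le 1/\nu$, with a constant independent of the offset $k$. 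Your Dirichlet-kernel identity is indeed the right tool (and is how one proves Lemma~\ref{lfp2}), but to extract $A(t)\ge\frac{\nu}{2}-\frac{1}{2}\bigl|\frac{\sin\pi t\nu}{\sin\pi t}\bigr|\ge\frac{\nu^3t^2}{4}$ for $|t|\le 1/\nu$ one must show $\bigl|\frac{\sin\pi t\nu}{\sin\pi t}\bigr|\le\nu-\frac{\nu^3t^2}{2}$, a second-order refinement of the trivial bound $\le\nu$. That refinement, not the two clean regimes you isolate, is the nontrivial step, and it is precisely what your sketch leaves unresolved. As written, the argument does not close, whereas the paper avoids the issue entirely by citing the Freiman--Pitman inequality off the shelf.
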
   
\begin{proof}This is an immediate consequence of the following lemma \begin{lemma}[\cite{FP}, Lemma 8]\label{lfp2} For $|t|\le 1/2$ and any positive integers $m$ and $k$
such that
$k\ge 2$ we have
$$ \sum_{j=m}^{m+k-1}  \sin^2 \pi j t\ge \frac{k}{4} \min\big( 1, (tk)^2\big).$$
\end{lemma}
By  Lemma \ref{lfp1}-(i), 
$$|\p_{B_\nu} (t)|\le \exp\big\{- 2\sum_{j=1}^\nu \t_j(1- \t_j)\sin^2\pi t k_j\big\}\le  \exp\big\{- 2\t\sum_{j=1}^\nu \sin^2\pi t k_j\big\}.$$
Thus \begin{eqnarray*}\int_{|t|>  \tau}|\p_{B_\nu}(t)|\dd t\ \le \ \int_{|t|>  \tau}e^{-   \frac{\t}{2}\sum_{j=1}^{\nu}\sin^2\pi jt  }
\dd t
\ \le \ \int_{|t|>  \tau}e^{- \frac{\t\nu}{2}  \min(1,   \nu  |t|)^2 } \dd t
\ \le e^{- \frac{\t \nu^3\tau^2}{2} }.\end{eqnarray*} 
\end{proof}

\subsection{Proof of Theorem \ref{wfp}.}

By Lemma \ref{lltb} applied with $k_j=k+ j-1$, $j=1, \ldots, \nu$, for every $n\in \Z$, 
\begin{eqnarray*}
 \bigg|\int_{0}^\tau  e^{- 2i \pi   nt} \p_{B_\nu}  (t)\dd t -\frac{e^{ -   \frac{      (   \sum_{j=k}^{k+\nu-1} (1-\t_j)j-n)^2}{ 2 {\rm Var}({B_\nu})} 
}}{\sqrt{2\pi {\rm Var}({B_\nu})}}   \bigg|
 &\le  &  C\Big( \tau \d^3 +    \frac{  e^{      -  2\pi^2 \tau^2{ \rm Var}({B_\nu}) }     } {   \sqrt{   {\rm Var}  ({B_\nu})   }      }  \Big)  
\cr &\le  & C  \ \frac{  {\nu^{1/6-4\e}}\rho^{-1    } + 
 {  e^{      -  2\pi^2\rho ^2 \d^2\nu^{1/3}}     } } {   \sqrt{   {\rm Var}  ({B_\nu})  
}      }    . 
\end{eqnarray*}

By combining  with Lemma \ref{suptau}
and using Fourier inversion formula,
\begin{eqnarray*}  \bigg|\P\{B_\nu=n\} -\frac{e^{ -   \frac{      (  \sum_{j=k}^{k+\nu-1} (1-\t_j)j-n)^2}{ 2 {\rm Var}({B_\nu})} 
}}{\sqrt{2\pi {\rm Var}({B_\nu})}}   \bigg|
 &\le  &  C\Big( \tau \d^3 +    \frac{  e^{      -  2\pi^2 \tau^2{ \rm Var}({B_\nu}) }     } {   \sqrt{   {\rm Var}  ({B_\nu})   }      } + e^{- \frac{\t\nu^3\tau^2}{8} } \Big)  
  . 
\end{eqnarray*}We have the following estimates
\begin{equation*}\begin{cases}
{\rm (i)}\quad & \frac{\d   }{n\nu^{1/3}}\le  \tau = \frac{\d   }{( \sum_{j=1}^\nu (1- \t_j)k^3_j)^{1/3}}\le   \frac{\d   }{\rho n( \t  \nu )^{1/3}},
\cr {\rm (ii)}\quad & \t(\rho n)^2\nu\le {\rm Var}({B_\nu})= \sum_{j=1}^\nu  (1-\t_j) \t_j k^2_j \le n^2\nu,
\cr {\rm (iii)}\quad & \tau^2{\rm Var}({B_\nu})\ge  \frac{\t \d^2}{  n^2 \nu^{2/3}}(\rho n)^2\nu= \t\rho ^2 \d^2\nu^{1/3}  .
\end{cases}\end{equation*}
Choose $\d =\nu^{-\e}$ with $ { 1/24}<\e< { 1/6}   $. Then
$$\tau \d^3 \le  \frac{\d^4}{\rho n (\t\nu)^{1/3}}=\frac{1}{\rho n\t^{1/3} \nu^{1/3+ 4\e}}=\frac{\nu^{1/6-4\e}}{\t ^{1/3}\rho n \nu^{1/2}}\le
\frac{\nu^{1/6-4\e}}{\t ^{1/3}\rho\sqrt{{\rm Var}({B_\nu})}  }.
$$
We pass to the control of the error terms. For the major integral term we have,
$$\tau \d^3 +    \frac{  e^{      -  2\pi^2 \tau^2{ \rm Var}({B_\nu}) }     } {   \sqrt{   {\rm Var}  ({B_\nu})   }      }\le \frac{1 } {   \sqrt{   {\rm Var}  ({B_\nu})  
}      }\Big(\frac{\nu^{1/6-4\e}}{\t ^{1/3}\rho   } + 
 {  e^{      -  2\pi^2\t \rho ^2 \d^2\nu^{1/3}}     }\Big).$$
   Consequently, 
   \begin{eqnarray*}  \bigg|\P\{B_\nu=n\} -\frac{e^{ -   \frac{      (  \sum_{j=k}^{k+\nu-1} (1-\t_j)j-n)^2}{ 2 {\rm Var}({B_\nu})} 
}}{\sqrt{2\pi {\rm Var}({B_\nu})}}   \bigg|
 &\le  & \frac{C } {   \sqrt{   {\rm Var}  ({B_\nu})  
}      }\Big(\frac{\nu^{1/6-4\e}}{\t ^{1/3}\rho   } + 
 {  e^{      -  2\pi^2\t \rho ^2 \d^2\nu^{1/3}}     }\Big)
  . 
\end{eqnarray*}

 \subsection{Other Estimates of $\boldsymbol{I^{\tau}(\nu,m)}$.} 
   The following lemma is relevant. Introduce 
for $q\ge 1$ integer,
\begin{equation}\p_{\theta_1, \ldots, \theta_\nu, k_1, \ldots, k_\nu}( q)= \p(q):= \frac{\big\|\sum_{j=1}^\nu\theta_j\cos 2\pi tk_j\big\|_{2q}}{\sum_{j=1}^\nu\theta_j}.
\end{equation}
\begin{lemma}\label{lfp3}  For any $0<c\le 1$,
\begin{eqnarray*}  \int_0^1|\p_{B_\nu}(t)|\dd t  &\le &   \Big(\frac{\p(q)}{c}\Big)^{2q} + e^{-(1-c)\sum_{j=1}^\nu  \t_j(1-\t_j)}  . 
\end{eqnarray*}
\end{lemma}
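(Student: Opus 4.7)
The plan is to convert the multiplicative bound on $|\p_{B_\nu}(t)|$ given by Lemma \ref{lfp1}(i) into an exponential whose exponent is a cosine sum, then split $[0,1]$ into a region where this sum is small (yielding exponential decay of the integrand) and a region where it is large (whose measure is controlled by a Chebyshev / Markov inequality at the $2q$-th moment).

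Concretely, combining Lemma \ref{lfp1}(i) with the identity $2\sin^2\pi t k_j = 1-\cos 2\pi t k_j$ yields
\[
|\p_{B_\nu}(t)| \le \exp\Big\{-\sum_{j=1}^\nu \t_j(1-\t_j)\big(1-\cos 2\pi t k_j\big)\Big\} = e^{-A}\,e^{f(t)},
\]
where $A := \sum_{j=1}^\nu \t_j(1-\t_j)$ and $f(t) := \sum_{j=1}^\nu \t_j(1-\t_j)\cos 2\pi t k_j$. Reading the formal weights in the definition of $\p(q)$ as $\t_j(1-\t_j)$, one has $\p(q) = \|f\|_{2q}/A$.

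Next, for $c\in(0,1]$ set $E := \{t \in [0,1] : f(t) \le cA\}$. On $E$ the integrand satisfies $|\p_{B_\nu}(t)| \le e^{-(1-c)A}$, contributing at most $e^{-(1-c)\sum_j \t_j(1-\t_j)}$ to $\int_0^1 |\p_{B_\nu}(t)|\,\dd t$. On $E^c$ we only use the trivial bound $|\p_{B_\nu}(t)| \le 1$, so the contribution is bounded by $|E^c|$; since $2q$ is even, $f^{2q} = |f|^{2q}$, and Chebyshev's inequality gives
\[
|E^c| \le |\{|f| > cA\}| \le \frac{\|f\|_{2q}^{2q}}{(cA)^{2q}} = \Big(\frac{\p(q)}{c}\Big)^{2q}.
\]
Adding the two contributions yields the claimed bound.

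The proof is essentially a standard Markov-type splitting and carries no serious analytic difficulty. The only mild subtlety is notational: one must identify the weights $\t_j$ inside the definition of $\p(q)$ with the quantities $\t_j(1-\t_j)$ that emerge naturally from Lemma \ref{lfp1}(i), so the statement should be read as applying to $\p_{\t_1(1-\t_1),\ldots,\t_\nu(1-\t_\nu),k_1,\ldots,k_\nu}(q)$.
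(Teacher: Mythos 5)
Your proof is correct and follows essentially the same route as the paper's: the same combination of Lemma \ref{lfp1}(i) with the double-angle identity, the same Chebyshev/Markov bound on the $2q$-th power of the cosine sum, and the same splitting of $[0,1]$ into the set where the cosine sum is small (exponential decay of the integrand) and its complement (measure controlled by Chebyshev). The only cosmetic difference is that you define your splitting set $E$ by the one-sided condition $f(t)\le cA$ while the paper uses the two-sided condition $|f(t)|\le cA$, which changes nothing in the estimates; your observation that the $\theta_j$ appearing in the definition of $\p(q)$ must be read as $\t_j(1-\t_j)$ also matches exactly what the paper does at the start of its proof.
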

 \begin{proof}[Proof of Lemma \ref{lfp3}]   Let  $\theta_j =\t_j(1-\t_j), j= 1\ldots , \nu $ and note 
$ E= \big\{t; |t|\le \frac{1}{2}:\big|\sum_{j=1}^\nu\theta_j\cos 2\pi tk_j\big|>c\sum_{j=1}^\nu 
\theta_j  \big\}$. 
At first, by using Tchebycheff's inequality, 
\begin{eqnarray*}    \l \{E \}  &\le & \big( c\sum_{j=1}^\nu 
\theta_j)^{-2q}\int_{-\frac{1}{2} }^{\frac{1}{2}}\Big|\sum_{j=1}^\nu\theta_j\cos 2\pi tk_j\Big|^{2q}\dd t
=c^{-2q}\p(q)^{2q}  , 
\end{eqnarray*}
Since $|\p_{B_\nu}(t)|\le 1$, we have
\begin{eqnarray*}   \int_0^1|\p_{B_\nu}(t)|\dd t \le   \l \{E \}  + \int_{E^c }|\p_{B_\nu}(t)|\dd t
 \le   c^{-2q}\p(q)^{2q}+ \int_{E^c }|\p_{B_\nu}(t)|\dd t  , 
\end{eqnarray*}
  By Lemma \ref{lfp1}, using that  $2\sin^2 a= 1-\cos 2a$, we have  for all real
$t$, 
\begin{eqnarray*}|\p_{B_\nu}(t)|&=&\prod_{j=1}^\nu|\p_j(t)| \le \exp\big\{-\sum_{j=1}^\nu 2\theta_j\sin^2\pi t k_j\big\}
\cr &= &\exp\big\{-\sum_{j=1}^\nu  \theta_j \big\}\exp\big\{ \sum_{j=1}^\nu  \theta_j\cos2\pi t k_j\big\} .
\end{eqnarray*} 
So that  
\begin{eqnarray*}\int_{E^c }|\p_{B_\nu}(t)|\dd t  &\le  &\int_{E^c } e^{-\sum_{j=1}^\nu 2\theta_j\sin^2\pi t
k_j } \dd t 
\cr &\le  &e^{-\sum_{j=1}^\nu  \theta_j }\int_{E^c }e^{| \sum_{j=1}^\nu 
\theta_j\cos2\pi t k_j | }\dd t 
\cr &\le  &e^{-(1-c)\sum_{j=1}^\nu  \theta_j } .
\end{eqnarray*}
By combining
\begin{eqnarray*}  \int_0^1|\p_{B_\nu}(t)|\dd t  &\le &   \Big(\frac{\p(q)}{c}\Big)^{2q} + e^{-(1-c)\sum_{j=1}^\nu  \theta_j}  . 
\end{eqnarray*}
\end{proof}   
 \begin{remark}Assume $\t_j=\t$ for all $j$. Then
$ \p(q)=\nu^{-1}{\big\|\sum_{j=1}^\nu \cos 2\pi tk_j\big\|_{2q}}$ and further $$\big\|\sum_{j=1}^\nu \cos 2\pi tk_j\big\|_{2q}^{2q}= N_{2q}(\mathcal N),  $$ 
where $N_{2q}(\mathcal N)$ is the number of solutions of \eqref{Deq01} with corresponding set of values $\mathcal N=\{k_1, \ldots, k_\nu \}$.
So that 
\begin{eqnarray*}  \int_0^1|\p_{B_\nu}(t)|\dd t  &\le &   \frac{N_{2q}(\mathcal N)}{(c\nu)^{2q}} + e^{-(1-c)\t(1-\t)\nu   }  . 
\end{eqnarray*}
-- In the case when $\mathcal N=\{1, \ldots, \nu \}$, this together with Theorem \ref{agw} gives  
\begin{eqnarray*}  \int_0^1|\p_{B_\nu}(t)|\dd t  &\le &   \frac{\nu^{2q-1}}{\sqrt q(c\nu)^{2q}} + e^{-(1-c)\t(1-\t)\nu   }\, =\,   \frac{1}{\nu c^{2q}\sqrt q } + e^{-(1-c)\t(1-\t)\nu   }  . 
\end{eqnarray*}
Taking $c=1-(2q)^{-1}$ gives 
\begin{eqnarray*}  \int_0^1|\p_{B_\nu}(t)|\dd t  &\le &  \frac{C}{\nu \sqrt q }+e^{-\t(1-\t)\nu/2q   }    . 
\end{eqnarray*}
 Take $q$ large, $q\sim \t(1-\t)\nu/3\log \nu$. It follows that
\begin{eqnarray*}  \int_0^1|\p_{B_\nu}(t)|\dd t  &\le &  \frac{C\log \nu }{\nu^{3/2} }  \end{eqnarray*}
whereas, in the other hand ${\rm Var}(B_\nu)=\t(1-\t)\sum_{j=1}^\nu j^2\sim C\t(1-\t)\nu^3$    ...

\end{remark} 
\begin{remark}We also have 
\begin{eqnarray*}\sum_{j=1}^\nu \t_j(1- \t_j)\sin^2\pi t k_j
&\ge& \nu \Big(\prod_{j=1}^\nu   \t_j(1- \t_j)\Big)^{1/\nu}\Big|\prod_{j=1}^{k_\nu} \sin \pi t j\Big|^{2/\nu}.
\end{eqnarray*}
 \end{remark}   
\subsection{Proof of Theorem \ref{lltweibin}}
Let $ \d=1/2  $. 
 By Lemma \ref{lltb},  Lemma \ref{lfp3} and Fourier inversion formula\begin{eqnarray*}
& & \bigg|\P\{{B_\nu}= n\} -\frac{e^{ -   \frac{   ( n-\E {B_\nu})^2}{ 2 {\rm Var}({B_\nu})}  }}{\sqrt{2\pi {\rm Var}({B_\nu})}}   \bigg|
  \le    C\bigg\{  \tau \d^3 +       \frac{  e^{      -  2\pi^2 \tau^2{ \rm Var}({B_\nu}) }     } {   \sqrt{   {\rm Var}  ({B_\nu})   }      }  +  \frac{1}{\sum_{j=1}^\nu 
\t_j(1-\t_j)} \bigg\}
\cr  &\le &  C\bigg\{   \frac{1}{( \sum_{j=1}^\nu (1- \t_j)k^3_j)^{1/3}} +       \frac{  e^{      -  2\pi^2 \d L^2 }     } {  ( \sum_{j=1}^\nu (1- \t_j)k^2_j)^{1/2}      }    +  \frac{1}{\sum_{j=1}^\nu 
\t_j(1-\t_j)} \bigg\}. 
\cr  &\le &         \frac{C}{\sum_{j=1}^\nu 
\t_j(1-\t_j)}. \end{eqnarray*}
Now  similarly, 
\begin{eqnarray*}& & \Big| \P\{{B_\nu}=n_0\}-\frac{1 }{\sqrt{ 2\pi{\rm Var}({B_\nu})}}\Big| \le    \frac{B_2 }{\sqrt{ 2\pi{\rm Var}({B_\nu})}}      +  \frac{1}{\sum_{j=1}^\nu 
\t_j(1-\t_j)}    
\cr &\le  & \frac{C }{( \sum_{j=1}^\nu (1- \t_j)k^2_j)^{1/2}}\, \bigg(\frac{1}{( \sum_{j=1}^\nu (1- \t_j)k^3_j)^{1/3}}+e^{      -  2\pi^2 \d L^2 }    \bigg)
 \cr &   & +  \frac{1}{\sum_{j=1}^\nu 
\t_j(1-\t_j)} 
 \le   \frac{C } {\sum_{j=1}^\nu 
\t_j(1-\t_j)} .\end{eqnarray*}
  This achieves the proof.

 \section{An ASLLT related to Burr's problem.}  \label{s4} Let 
 $\l_0<
\l_1<\ldots$ be a sequence of positive integers, call it $A$, and let
$$P(A)=\Big\{ \sum_{i} \e_i \l_i, \hbox{ $\e_i=0$ or $1$, $a_i\in A$ and ${\sum_{i} \e_i<\infty}$}\Big\}. $$
Burr asked in \cite{B} which  sets $S$ of integers are equal to $P(A)$ for some $A$? He
mentioned that if the complement of $S$ grows sufficiently rapidly,
then there exists such a sequence $A$.
  Hegy\' vari   showed in \cite{H} that if $B=\{b_i, i\ge 1\}$ is such that $7 \le b_1<b_2<\cdots $ and
\begin{equation} b_{n+1}\ge 5b_n\qq \hbox{for every $n$},
\end{equation}
then there exists a sequence $A$ such that
 $ P(A)= \N \backslash B$,
thereby improving substantially an earlier unpublished result of
Burr. He also showed that his result cannot be improved essentially. More precisely, if $B$ is such
that
\begin{equation} b_{n+1}\le 2b_n\qq \hbox{for every $n$ large enough},
\end{equation}
and $B$ is a   Sidon set, namely $b_i+b_j= b_k+
b_\ell$ implies $i=k$, $j=\ell$ or $i=\ell$, $j=k$, then there is {\it no}
sequence $A$ for which $ P(A)= \N \backslash B$.  We refer to \cite{{B},{F},{H}} for similar questions.   Here we
examine a variant of the initial problem. Consider the  set
$E$ composed with    all   finite sums   
 \begin{equation}\label{eq1} \l_{j_1} +\ldots+ \l_{j_n}, \qq  0\le j_1 \le \ldots\le j_n, \quad  n\ge 1.
\end{equation}  
 Let $0<\eta<1$ and  let $E_\eta  \subseteq   E$ be the set composed with all finite sums $\l_{j_1} +\ldots+ \l_{j_n}$ such that at most
$\lfloor \eta n \rfloor $ summands may coincide. 

\smallskip Now let
$\{x_n, n\ge 1\}$ be a   sequence of integers increasing nearly linearly, so that it is a relatively "full" sequence.  More precisely,  we
assume  
  there are 
    reals
$a>1,\d>0 $ such that
\begin{equation}\label{21}  x_n -na \sim \d \sqrt{  n} ,\qq\quad n \to \infty.
\end{equation} 
  We are interested in estimating from below the proportion of terms from this sequence which may be represented by a sum $\l_{j_1}
+\ldots+
\l_{j_n}$, namely which belong to   $E$. 
\begin{theorem} Let
$D={\rm g.c.d.}\big\{\l_i-\l_j, i> j\ge 0\big\}
 $. Let also
\begin{equation} \rho= \sup\Big\{r  : \sum_{j=0}^\infty  \l_j\, r^j<\infty\Big\}  .    
\end{equation} 
Assume that $ 0<  \rho\le 1$. Then for some $0<\eta<1$  depending on both $\rho$ and $a$,
 $$ \liminf_{ t\to
\infty}{1\over    \log t } \sum_{ n\le t}  {  1 \over \sqrt n} {\bf 1}_{\{ x_n\in E_\eta\}}\ge {D\over  \sqrt{ 2\pi}\s}e^{- 
{\d^2\over  2\s^2  } } .$$ 
Here we have noted $\s^2=(1-r) \sum_{j=0}^\infty \l_j^2  r^{   j   }-\big((1-r) \sum_{j=0}^\infty \l_j   r^{   j   }\big)^2 $, and $
0<r< 
\rho
$ is   solution of the equation
$$(1-r) \sum_{j=0}^\infty \l_j  r^{   j   }=a.$$
Further,    there exists with probability one a random  subsequence $\l'_j=\l'_j(\o)$, $j=1,\ldots$,  tending to infinity with $n$,
 such that for all $n$ large enough, among $\l'_{ 1 },\ldots, \l'_{ n }$  at most $\lfloor n\eta\rfloor$      may coincide, and 
$$ \lim_{ t\to \infty}{1\over    \log t } \sum_{ n\le
t}  {  1 \over \sqrt n} {\bf 1}_{\{ x_n=\l'_{ 1 } +\ldots+ \l'_{ n } \}} ={D\over  \sqrt{ 2\pi}\s}e^{- 
{\d^2\over  2\s^2  } }.
$$ 
 \end{theorem}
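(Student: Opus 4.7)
The plan is to introduce an i.i.d.\ random model whose summands have mean $a$, apply Gnedenko's lattice LLT for the pointwise asymptotic, then the almost sure LLT in the logarithmic-average form, and finally absorb the coincidence constraint defining $E_\eta$ via a Bernstein/Borel--Cantelli argument. Concretely, let $r\in(0,\rho)$ solve $(1-r)\sum_{j\ge 0}\l_j r^j=a$ (its existence is part of the hypothesis), and let $Y,Y_1,Y_2,\ldots$ be i.i.d.\ with $\P\{Y=\l_j\}=(1-r)r^j$. Then $\E Y=a$, $\mathrm{Var}(Y)=\sigma^2$ matches the $\sigma^2$ of the statement, and $Y$ is lattice-distributed on $\l_0+D\Z$ with maximal span exactly $D$. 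Set $S_n=Y_1+\cdots+Y_n$.

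Because $x_n-na\sim\d\sqrt n$, Gnedenko's theorem yields
$$\sqrt n\,\P\{S_n=x_n\}\ \longrightarrow\ \frac{D}{\sqrt{2\pi}\,\sigma}\,e^{-\d^2/(2\sigma^2)}.$$
Plugging this into the almost sure lattice LLT (the framework of the paper), one obtains, almost surely,
$$\lim_{t\to\infty}\frac{1}{\log t}\sum_{n\le t}\frac{\mathbf{1}_{\{S_n=x_n\}}}{\sqrt n}\ =\ \frac{D}{\sqrt{2\pi}\,\sigma}\,e^{-\d^2/(2\sigma^2)}.$$

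To convert the event $\{S_n=x_n\}$ into $\{x_n\in E_\eta\}$, I would fix any $\eta\in(1-r,1)$ (this is exactly the advertised dependence of $\eta$ on $\rho$ and $a$, through $r$) and control the occupation numbers $N_k(n):=\#\{j\le n:Y_j=\l_k\}$. Since $N_k(n)\sim\mathrm{Bin}(n,(1-r)r^k)$ with $(1-r)r^k\le 1-r<\eta$, Bernstein's inequality gives $\P\{N_k(n)>\eta n\}\le e^{-cn}$ with $c>0$ \emph{uniform in} $k$. For $k$ beyond a threshold $K_n=O(\log n)$ one has $\P\{\exists j\le n:Y_j=\l_k\}\le n(1-r)r^k$, whose sum over $k>K_n$ is $\le n^{-2}$ once $K_n$ is taken large enough; combined with the Bernstein bound summed over $k\le K_n$ and Borel--Cantelli, this yields $\max_k N_k(n)\le\lfloor\eta n\rfloor$ almost surely for all $n$ large. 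On this full-measure event, whenever $S_n(\omega)=x_n$ occurs the decomposition $x_n=Y_1(\omega)+\cdots+Y_n(\omega)$ exhibits $x_n$ as an element of $E_\eta$, so the first (liminf) statement follows from the ASLLT display above. Setting $\l'_j(\omega):=Y_j(\omega)$ then supplies the random representing sequence of the second claim, the $\max_{j\le n}\l'_j\to\infty$ being forced by $S_n\to\infty$ along the representing subsequence.

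The main obstacle is this last step: the Bernstein control has to be uniform in $k$, and summability of the resulting bound over $k$ requires exploiting the geometric decay of $(1-r)r^k$. This uniformity is what forces the constraint $\eta>1-r$ and hence produces the announced dependence of $\eta$ on $\rho$ and $a$; it is also the place where the hypothesis $\rho\le 1$ (which gives $r<1$, hence the geometric tail) genuinely enters the argument.
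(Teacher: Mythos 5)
Your proposal is correct and its overall architecture matches the paper's: same auxiliary i.i.d.\ model with $\P\{X=\l_j\}=(1-r)r^j$, same determination of $r$ by matching $\E X=a$, same verification that the span is $D$ and that higher moments exist, then Gnedenko's LLT followed by the almost sure LLT of Theorem~\ref{tb}, and finally a Borel--Cantelli argument to kill the coincidence event so that the realization $S_n=x_n$ exhibits $x_n$ as an element of $E_\eta$.

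The one place where you genuinely depart from the paper is in the control of the maximal multiplicity. The paper bounds $\P\bigl\{\exists\,1\le i_1<\dots<i_k\le n:\;X_{i_1}=\dots=X_{i_k}\bigr\}$ directly, with $k=\lfloor\eta n\rfloor$, by a union bound over $\binom{n}{k}$ index tuples together with $\P\{X_{i_1}=\dots=X_{i_k}\}=\tfrac{(1-r)^k}{1-r^k}\le 2(1-r)^k$ and a Stirling estimate of $\binom{n}{k}$; this makes the resulting base $\bigl(\tfrac{1-r}{\eta(1-\eta)^{1/\eta-1}}\bigr)^{\eta}$ less than $1$ only when $\eta$ is close enough to $1$ that $(1-r)<\eta(1-\eta)^{1/\eta-1}$, and Borel--Cantelli is then applied to these (essentially geometric) probabilities. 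You instead control the occupation numbers $N_k(n)$ \emph{level by level} via a Chernoff/Bernstein bound uniform in $k$ (using $p_k=(1-r)r^k\le 1-r$), together with a logarithmic cut-off $K_n$ and a crude tail estimate for $k>K_n$. This is a cleaner route and, notably, it works for the full range $\eta>1-r$, which is the optimal threshold (for $\eta\le 1-r$ the level $k=0$ alone has $\E N_0(n)=(1-r)n\ge\eta n$), whereas the paper's combinatorial bound forces a strictly larger $\eta$. Since the theorem only asserts the existence of some $\eta$, both routes prove the statement; yours actually yields a slightly wider admissible range of $\eta$.

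Two small remarks. First, you wave away the existence of $r$ as ``part of the hypothesis,'' but the paper in fact proves it: $\mu(r)=(1-r)\sum\l_jr^j$ is continuous on $[0,\rho)$ with $\mu(0)=0$ and $\mu(r)\to\infty$ as $r\uparrow\rho$ (using $\l_j\ge j$ when $\rho=1$), so the intermediate value theorem applies; this should be included. Second, the floor in $\lfloor\eta n\rfloor$ costs $1$, so formally one should run Bernstein at some $\eta'\in(1-r,\eta)$ and use $\lfloor\eta n\rfloor\ge\eta' n$ for $n$ large; this is cosmetic and does not affect the argument.
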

 
\subsection{Preliminaries}  We   first   recall some  auxiliary results 
on which the proof is based.     Let $X$ be  a square integrable     random variable  
with lattice  distribution function
$F$ and put
  \begin{equation}\label{moment}\m =\E X, \qq\quad \s^2=\E X^2-(\E X)^2 . 
 \end{equation}
Let $D$ be   the maximal
span  of  $X$.   Let also 
$ \{X_k, k\ge 1\}$ be independent copies of
$X$, and consider their partial sums
$S_n=X_1+\ldots +X_n$, $n\ge 1$.  
 We assume throughout that $\s>0$.   
Almost sure versions with rate of Gnedenko's theorem (see after \eqref{llt}) were recently proved in \cite{GW3}.  
  Let   $g(x)= {D\over 
\sqrt{ 2\pi}\s}e^{-  {x^2/(  2\s^2 ) } }  $, $x$ real.  By Gnedenko's local limit theorem,  
\begin{equation}\label{asllt0}\lim_{n\to
\infty}\sqrt n
\P\{S_n=\kappa_n\}
\, =g(\k),  
\end{equation} 
for any sequence $\{\k_n, n\ge 1\}$ of reals such that
\begin{equation}\label{2}  \lim_{n\to \infty} { \k_n-n\m   \over    \sqrt{  n}  }= \k .
\end{equation}  
  We say that $X$ satisfies an almost sure local limit theorem  if 
\begin{equation}\label{asllt}\lim_{N\to \infty}{1\over \log N}\sum_{n=1}^N {1\over \sqrt n}   {\bf 1}_{  \{  S _n   = \kappa_n \}} 
\buildrel{\rm a.s.}\over {=}g(\k),
\end{equation}
 holds whenever    (\ref{2}) is   satisfied.
  It is easily seen that (\ref{asllt}) amounts to establish 
\begin{eqnarray}\label{4}\lim_{N\to \infty}{1\over \log N}\sum_{n=1}^N {B_n\over n}    & \buildrel{\rm
a.s.}\over {=}&0 , 
\end{eqnarray}
where we put $ B_n= \sqrt n \big({\bf 1}_{\{S_n=\kappa_n\}}-\P\{S_n=\kappa_n\} \big)  .$
\begin{theorem}[\cite{GW3}, Theorem 1] \label{tb}Assume that  $\E X^{2+\e}<\infty$  for some positive $\e$. Then,   
$$ \lim_{ N\to \infty}{1\over    \log N } \sum_{ n\le
N}  {  1 \over \sqrt n} {\bf 1}_{\{S_n=\kappa_n\}} \buildrel{a.s.}\over {=}g(\k),$$
  for any  sequence of integers $\{\k_n, n\ge 1\}$     such that (\ref{2}) holds. 
 Moreover, if (\ref{2}) is sharpened 
as follows,
$$    { \k_n-n\m   \over    \sqrt{  n}  } = \k + \mathcal O_\eta\big(  (\log n)^{-1/2+\eta}        \big),
$$then 
$${1\over    \log N } \sum_{ n\le
N}  {  1 \over \sqrt n} {\bf 1}_{\{S_n=\kappa_n\}}  \buildrel{\rm a.s.}\over {=}g(\k) +\mathcal O_\eta\big(  (\log N)^{-1/2+\eta}      
 \big) \Big).$$ 
\end{theorem}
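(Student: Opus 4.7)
The plan is to reduce to Theorem~\ref{tb} by introducing the natural probabilistic model on $A$. Take i.i.d.\ copies $(X_k)_{k\ge 1}$ of a random variable $X$ with
\[
\P\{X=\lambda_j\}=(1-r)\,r^{j},\qquad j\ge 0,
\]
where $r\in(0,\rho)$ is chosen as the (unique, by monotonicity) solution of $(1-r)\sum_{j\ge 0}\lambda_j r^{j}=a$. A direct computation gives $\E X=a$, $\mathrm{Var}(X)=\sigma^2$, and the maximal span of $X$ equals $D=\gcd\{\lambda_i-\lambda_j:i>j\ge 0\}$ because the support of $X$ is exactly $\{\lambda_j\}_{j\ge 0}$. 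Finiteness of $\E X^{2+\varepsilon}$ for some $\varepsilon>0$ follows from $r<\rho$: pick $r<r'<\rho$, so $\lambda_j=O((1/r')^{j})$ and $(r/(r')^{2+\varepsilon})^{j}$ is summable for $\varepsilon$ small enough.

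Since $x_n-na\sim\delta\sqrt{n}$ is precisely condition~(2) of Theorem~\ref{tb} with $\kappa_n=x_n$ and $\kappa=\delta$, that theorem delivers, almost surely,
\[
\frac{1}{\log N}\sum_{n\le N}\frac{1}{\sqrt{n}}\,\mathbf{1}_{\{S_n=x_n\}}\;\xrightarrow[N\to\infty]{}\;g(\delta)\,=\,\frac{D}{\sqrt{2\pi}\,\sigma}\,e^{-\delta^2/(2\sigma^2)}.
\]
The central observation is that on $\{S_n=x_n\}$ the identity $x_n=X_1+\cdots+X_n$ realizes $x_n$ as a sum of $n$ elements of $A$, so $x_n\in E$. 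To upgrade membership from $E$ to $E_\eta$ I control multiplicities: set $N_j(n)=|\{i\le n:X_i=\lambda_j\}|$. By the SLLN, $N_j(n)/n\to(1-r)r^{j}$ a.s.\ for each $j$, with pointwise maximum at $j=0$ equal to $1-r$. A uniform control is obtained by splitting at $j=K\log n$: for $j<K\log n$ I use Hoeffding to bound $\P\{N_j(n)>\eta n\}\le e^{-2c^2 n}$ (with $c=\eta-(1-r)>0$), and for $j\ge K\log n$ the crude first-moment bound $\P\{N_j(n)\ge 1\}\le n(1-r)r^{j}$ gives a summable tail once $K$ is large enough. Borel--Cantelli then yields $\max_j N_j(n)\le\lfloor\eta n\rfloor$ for all large $n$, a.s., for any fixed $\eta\in(1-r,1)$.

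Combining, on a full-measure event, $\{S_n=x_n\}\subseteq\{x_n\in E_\eta\}$ for all $n$ large enough, so
\[
\liminf_{N\to\infty}\frac{1}{\log N}\sum_{n\le N}\frac{1}{\sqrt{n}}\,\mathbf{1}_{\{x_n\in E_\eta\}}\ge g(\delta)\qquad\text{a.s.},
\]
and since this left-hand side is deterministic the inequality holds unconditionally, proving the first assertion. For the second, I fix any $\omega$ in the common full-measure event and define $\lambda'_j(\omega):=X_j(\omega)$. Then $\{x_n=\lambda'_1+\cdots+\lambda'_n\}=\{S_n(\omega)=x_n\}$, so the limit is exactly the one delivered by Theorem~\ref{tb}, and the multiplicity clause is precisely the SLLN bound just established. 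The phrase ``$\lambda'_j$ tending to infinity with $n$'' I read as the set of values $\{\lambda'_j:j\ge 1\}$ being unbounded, which is automatic since each $\lambda_j$ is sampled infinitely often by Borel--Cantelli.

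The principal obstacle will be the uniform-in-$j$ multiplicity control, where the exponential decay of $(1-r)r^{j}$ must be exploited to dispatch the infinite tail of $j$'s. A secondary point is the verification of $\E X^{2+\varepsilon}<\infty$ when $\rho<1$, which effectively imposes $r<\rho^{2+\varepsilon}$ and hence a compatibility between $a$ and $\rho$; this is implicit in the intended framework of the theorem.
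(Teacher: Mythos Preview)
Your proposal is not a proof of Theorem~\ref{tb} itself---that result is quoted from \cite{GW3} and is not proved in the paper---but rather a proof of the main theorem of Section~\ref{s4} (on Burr's representation problem), which \emph{invokes} Theorem~\ref{tb}. Comparing against the paper's proof in Section~4.2 of that theorem: the global architecture is identical (introduce the geometric-type law $\P\{X=\lambda_j\}=(1-r)r^j$, tune $r$ so that $\E X=a$, check the moment hypothesis, apply Theorem~\ref{tb} with $\kappa_n=x_n$, and separately bound the maximal multiplicity among $X_1,\dots,X_n$ to pass from $E$ to $E_\eta$). The one substantive difference is the multiplicity step. The paper bounds $\P\{\exists\,i_1<\dots<i_k\le n:X_{i_1}=\dots=X_{i_k}\}$ by $\binom{n}{k}\cdot 2(1-r)^k$, takes $k=\lfloor\eta n\rfloor$, applies Stirling, and must push $\eta$ close to $1$ to obtain geometric decay. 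Your route---Hoeffding on each occupancy count $N_j(n)$ for $j<K\log n$ together with the first-moment bound $\P\{N_j(n)\ge 1\}\le n(1-r)r^j$ for larger $j$---is more elementary and delivers the conclusion for \emph{every} $\eta\in(1-r,1)$, a strictly wider range than the paper obtains. Your flag on the moment condition is also pertinent: finiteness of $\E X^{2+\varepsilon}$ genuinely needs $r$ below (roughly) $\rho^{2+\varepsilon}$, not merely $r<\rho$, since the radius of convergence of $\sum_j\lambda_j^{2+\varepsilon}s^j$ is $\rho^{2+\varepsilon}$; the paper's one-line justification (``because $r<\rho$'') has the same gap.
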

 
\subsection{Proof} We consider the following random model. Let $0<r<\rho$ and  let $X$ be a  random variable   defined by
$$     \P\{X = \l_j \}  =
   (1-r)  r^{   j   }  ,          \qquad\quad  j= 0,1,\ldots .    
$$ The function  $\m(r)  = (1-r) \sum_{j=0}^\infty \l_j  r^{   j   } $ is continuous on $[0,\rho[$. Further $\m(0)=0$ and $\lim_{r\uparrow
\rho}\m(r)=\infty$. We can thus select a real $r\in ]0,\rho[$ so that  
 $   \E X  = \m(r) =a$. 
 Next  $\E X^2= (1-r) \sum_{j=0}^\infty \l_j^2  r^{   j   } <\infty$. And because $r<\rho$,  $\E X^{2+\a}  <\infty$   for some positive
$\a$.
  It is further clear that $\s^2=\E X^2-  (\E X)^2$ cannot vanish unless $X$ is a constant almost surely, since $\s^2=\E (X -   \E
X)^2$. This case being excluded by construction, we have  $\s>0$.   Let also 
$ \{X_k, k\ge 1\}$ be independent copies of
$X$, and consider their partial sums
$S_n=X_1+\ldots +X_n$, $n\ge 1$
 
\vskip 2pt Now observe that 
\begin{eqnarray*}\P\big\{X_{i_1}=X_{i_2}=\ldots =X_{i_k}\big\}&=& \sum_{j=0}^\infty  \P\{X = \l_j \}^k  =
   (1-r)^k  \sum_{j=0}^\infty r^{ k  j   }\cr &= &{(1-r)^k\over(1-r^k) }  \le 2 (1-r)^k, 
\end{eqnarray*}
if $k$ is large, which we do assume. Thus
\begin{eqnarray*}\P\big\{\exists 1\le i_1<\ldots<i_k\le n:X_{i_1}=X_{i_2}=\ldots =X_{i_k}\big\}&\le & 2 C^k_n (1-r)^k.   
\end{eqnarray*}
We take $k=\lfloor n\eta\rfloor$. Since $n! \sim \sqrt{2\pi n} n^n e^{-n}  $, we have for $n$ large
\begin{eqnarray*}C^k_n
 &\le& 2\Big({n\over n-k}\Big)^{n-k}\Big({n\over  k}\Big)^{ k}\Big({n\over 2\pi (n-k)k}\Big)^{1/2} \le  \Big({1\over 1-\eta}\Big)^{n(1-
\eta ) }\Big({1\over  \eta}\Big)^{ n\eta}\cr &=& \Big[\Big({1\over 1-\eta}\Big)^{  {1\over \eta}- 1 
} \Big({1\over  \eta}\Big)\Big]^{ n\eta} . 
\end{eqnarray*}
Let 
$$ C_n=\Big\{\exists 1\le i_1<\ldots<i_{\lfloor n\eta\rfloor} :X_{i_1}=X_{i_2}=\ldots =X_{i_{\lfloor n\eta\rfloor}}\Big\}, \qq
n=1,2,\ldots$$ 
These sets are non-increasing. And so
 $$\P\Big\{\bigcup_{n=m}^\infty C_n \Big\}\le \P\big\{  C_m \big\}\le   \Big[ 
 \Big({1-r\over  \eta (1-\eta)^{{1\over \eta}-1}}\Big)\Big]^{ m\eta} .$$
  Since $(1-\eta)^{{1\over \eta}-1}= \exp\{-{1-\eta\over \eta}\log {1\over 1-\eta}\} \to 1 $ as   $\eta\uparrow 1$, and $0<r<1$, it
follows that one can select $\eta$ so that
$$  {1-r\over  \eta (1-\eta)^{{1\over \eta}-1}} <1.$$
This choice implies that 
$$  \P\big\{\limsup_{n\to \infty} C_n \big\}=0. $$ 
Thus, with probability one, for all $n$ large enough,   there is no $k$-uple, $1\le i_1<\ldots<i_k$, with $k\ge \lfloor
n\eta\rfloor$,   such that 
$X_{i_1}=X_{i_2}=\ldots =X_{i_k}$. In particular, with probability one, for all $n$ large enough, at most $\lfloor
n\eta\rfloor$ from the random variables $X_i$, $i\le n$ may coincide. 
\vskip 3pt Besides, using  Gnedenko's theorem   we   have, uniformly in
$N$,
 $$     \sqrt n\, \P\{S_n=N\}={D\over   \s\sqrt{ 2\pi }}e^{- 
{(N-na)^2/  2 n \s^2} } +o(1).  $$
By assumption  
 $ \lim_{n\to \infty} { (x_n-na)  /    \sqrt{  n}  }= \d 
 $, so that (\ref{2}) is satisfied. 
Therefore $  \sqrt n\,\P\{S_n=x_n\}\sim {D\over  \s\sqrt{ 2\pi} }e^{- 
{\d^2\over  2\s^2  } }$ as $n\to \infty$. Since the assumptions of   Theorem \ref{tb} are also fulfilled, it follows that
\begin{equation}\label{1} \lim_{ t\to \infty}{1\over    \log t } \sum_{ n\le
t}  {  1 \over \sqrt n} {\bf 1}_{\{S_n=x_n\}} \buildrel{a.s.}\over {=}{D\over  \sqrt{ 2\pi}\s}e^{- 
{\d^2\over  2\s^2  } }.
\end{equation}
By picking  $\o $ in a measurable set of full measure, we find by what proceeds and (\ref{1}), 
that there exists a subsequence $\l_0'\le \l_1'\le \ldots$, $\l_n'=\l_n'(\o)$,
 such that
\begin{equation}\label{10} \lim_{ t\to \infty}{1\over    \log t } \sum_{ n\le
t}  {  1 \over \sqrt n} {\bf 1}_{\{ x_n=\l'_{ 1 } +\ldots+ \l'_{ n } \}} ={D\over  \sqrt{ 2\pi}\s}e^{- 
{\d^2\over  2\s^2  } }.
\end{equation} 
Further, for all $n$ large enough, at most $\lfloor n\eta\rfloor$ from the summands $\l'_{ 1 },\ldots, \l'_{ n }$ may coincide. Thus
  $\l'_n\to
\infty$ with $n$. 
But $  x_n=\l'_{ 1 } +\ldots+ \l'_{ n }$ and 
the fact that among $ \l'_{ 1 },\ldots, \l'_{ n} $, at most   $\lfloor n\eta\rfloor$     can coincide,  implies
that
$x_n\in E_\eta$.
We consequently deduce $$ \liminf_{ t\to
\infty}{1\over   
\log t } \sum_{ n\le t}  {  1 \over \sqrt n} {\bf 1}_{\{ x_n\in E_\eta\}}\ge {D\over  \sqrt{ 2\pi}\s}e^{- 
{x^2\over  2\s^2  } },$$ 
 as claimed.  The second part of the Theorem is a direct consequence of (\ref{10}).  

 \section{A Concluding Remark.}\label{s5}
A probably well-known fact is that the  local limit theorem is not a sufficiently sharp tool for estimating the probability $\P\{S_n\in E\}$, where    $E$  is an infinite set of integers and  $S_n=X_1+\ldots +X_n$  a sum of independent copies of
a  random variable $X$. As we could not find in the litterature an explicit example,  we mention  here a very simple one given in  \cite{W2} and showing that this already arises  for bounded random variables and for elementary sets $E$, namely arithmetic progressions. 
 
\vskip 2pt
 Let   $d$ be  some positive integer and take $E= d\N$. 
Let also $B_n=\b_1+\ldots+\b_n$,  where  
$
\b_i 
$ are independent standard  Bernoulli random variables. By using the sharpest form   of the local limit theorem for standard Bernoulli random variables, derived from \cite[Theorem 13, Chapter 7]{[P]},  \begin{eqnarray*}  \sup_{z}\, \big|  \P\big\{B_n=z\}
 -\sqrt{\frac{2}{\pi n}} e^{-{ (2z-n)^2\over2 n}}\big|= o ( {1}/{n^{3/2}} ) ,  \end{eqnarray*}
  one easily gets \begin{eqnarray} \label{dllt0}\sup_{2\le d\le n}\Big|\P \{ B_n\in d\N \}-\sqrt{\frac{2}{\pi n}}\sum_{ 
z\equiv 0\, (d)} e^{-{ (2z-n)^2\over 2 n}}\Big|= o(  \frac{\sqrt{  \log n}}{n}  ).
\end{eqnarray}     By operating quite differently, we obtained in \cite{W4} the following uniform estimate.
  Let $\Theta (d,m)
$ be the Theta elliptic function  defined by
\begin{equation*}
\Theta (d,m)  =  \sum_{\ell\in \Z} e^{im\pi{\ell\over   d }-{m\pi^2\ell^2\over 2 d^2}}, 
 \end{equation*}  
   We have  
\begin{equation} \label{uniftheta} \sup_{2\le d\le n}\Big|\P\big\{B_n\in d\N\big\}- {\Theta(d,n)\over d}  \Big|=
{\mathcal O}\Big({ \log^{5/2} n  \over n^{ 3/2}}\Big).
\end{equation} 
By using Poisson summation formula, this implies that $$   \sup_{2\le d\le n}\Big|\P\big\{B_n\in d\N\big\}- \sqrt{  { 2\over \pi
n}}\sum_{ 
z\equiv 0\, (d)} e^{-{ (2z-n)^2\over 2 n}}  \Big|=
{\mathcal O}\Big({ \log^{5/2} n  \over n^{ 3/2}}\Big),
$$
 which is   much better   than   (\ref{dllt0}).  
We refer to \cite{W2} 
 for more details.


\end{document}